%
\documentclass[a4paper, 11pt]{amsart}
\usepackage{bm,amsfonts,amsmath,amssymb,bbm,mathrsfs,amsthm}

\usepackage{graphicx}
\usepackage{soul}

\usepackage{color,hyperref,eurosym, eucal,palatino, marginnote}

\definecolor{darkblue}{rgb}{0.2,0.2,0.6}
\definecolor{darkblue2}{rgb}{0.2,0.2,0.9}
\definecolor{superdarkblue}{rgb}{0.2,0.2,0.3}
\hypersetup{colorlinks=true, linkcolor=darkblue,citecolor=darkblue,
urlcolor = superdarkblue}
\definecolor{citegreen}{rgb}{0.2,0.2,0.6}


\usepackage[normalem]{ulem}
\definecolor{DarkGreen}{rgb}{0,0.5,0.1}
\definecolor{DarkYellow}{rgb}{1,0.7,0} 

\newcommand\soutD{\bgroup\markoverwith
{\textcolor{DarkYellow}{\rule[.5ex]{2pt}{1pt}}}\ULon}
\newcommand\soutP{\bgroup\markoverwith
{\textcolor{blue}{\rule[.5ex]{2pt}{1pt}}}\ULon}
\newcommand{\Hm}[1]{\leavevmode{\marginpar{\tiny%
$\hbox to 0mm{\hspace*{-0.5mm}$\leftarrow$\hss}%
\vcenter{\vrule depth 0.1mm height 0.1mm width \the\marginparwidth}%
\hbox to
0mm{\hss$\rightarrow$\hspace*{-0.5mm}}$\\\relax\raggedright #1}}}

\usepackage{geometry}

\geometry{
	papersize = {8.5in,11in},
	lmargin=1.45 in,
	rmargin = 1.45 in,
	top=1.25 in,
}


\newcommand\Op{\sfH_\mu}
\newcommand\OpD{\sfH_{\rm D}}

\newcommand\RD{\sfR_{\rm D}}

\newcommand{\supp}{\mathop{\mathrm{supp}}\nolimits}

\newcommand{\dist}{\mathop{\mathrm{dist}}\nolimits}

\newcommand{\eps}{\varepsilon}

\renewcommand{\aa}{\alpha}

\theoremstyle{definition}

%
%
\usepackage[normalem]{ulem}

\makeatletter
\newcommand{\vast}{\bBigg@{3}}
\newcommand{\Vast}{\bBigg@{5}}

\usepackage{scrextend}
\changefontsizes{11.9pt}

\newcommand{\eg}{{\it e.g.}\,}
\newcommand{\ie}{{\it i.e.}\,}
\newcommand{\cf}{{\it cf.}\,}

\renewcommand\and{\qquad\text{and}\qquad}

\newcommand\sm{\setminus}

\newcommand{\comm}[1]{}

\renewcommand\aa{\alpha}
\newcommand\lm{\lambda}
\newcommand\s{\sigma}
\newcommand\p{\partial}

\newcommand\Omg{\Omega}

\newcommand\ii{{\mathsf{i}}}

\newcommand\one{\mathbbm{1}}

\newcommand{\spn}{\mathrm{span}\,}

\renewcommand\Re{{\rm Re}\,}
\renewcommand\Im{{\rm Im}\,}

\newcommand\arr{\rightarrow}

\newcommand\diag{{\rm diag}\,}

\newcommand\Sg{\Sigma}

\newcommand\sd{\sigma_{\rm d}}

\newcommand\dd{{\mathsf{d}}}

\newcounter{counter_a}
\newenvironment{myenum}{\begin{list}{{\rm(\roman{counter_a})}}%
{\usecounter{counter_a}
\setlength{\itemsep}{1.ex}\setlength{\topsep}{0.8ex}
\setlength{\leftmargin}{5ex}\setlength{\labelwidth}{5ex}}}{\end{list}}

\usepackage[latin1]{inputenc}
\usepackage[T1]{fontenc}

\numberwithin{figure}{section}
\numberwithin{equation}{section}
\theoremstyle{plain}
\newtheorem*{thm*}{Theorem}
\newtheorem{thm}{Theorem}[section]
\newtheorem{hyp}[thm]{Hypothesis}

\newtheorem{lem}[thm]{Lemma}
\newtheorem{prop}[thm]{Proposition}

\newtheorem{question}[thm]{Question}

\theoremstyle{remark}
\newtheorem{remark}[thm]{Remark}
\theoremstyle{plain}


%

\newcommand\ov{\overline}

\def\ov{\overline}
\newcommand\ran{{\rm ran\,}}


\def\sS{{\mathfrak S}}

      \def\dC{{\mathbb C}}
      
   \def\dH{{\mathbb H}}   
      
   \def\dN{{\mathbb N}}   
      \def\dR{{\mathbb R}}

\def\sfA{{\mathsf A}}      
      
   \def\sfH{{\mathsf H}}   \def\sfI{{\mathsf I}}

      \def\sfR{{\mathsf R}}
   \def\sfT{{\mathsf T}}

   \def\cH{{\mathcal H}}

\def\cS{{\mathcal S}}

\def\sfm{{\mathsf m}}

\def\N{\mathbb{N}}

\newcommand{\dom}{\mathrm{dom}\,}

\makeatletter
\def\section{\@startsection{section}{1}\z@{.9\linespacing\@plus\linespacing}%
	{.7\linespacing} {\fontsize{13}{14}\selectfont\bfseries\centering}}

\def\subsection{\@startsection{subsection}{1}\z@{.9\linespacing\@plus\linespacing}%
	{.3\linespacing} {\fontsize{11}{12}\selectfont\bfseries}}	
	
\def\paragraph{\@startsection{paragraph}{4}%
	\z@{0.3em}{-.5em}%
	{$\bullet$ \ \normalfont\itshape}}

\@addtoreset{equation}{section}
\makeatother


\makeatother

\begin{document}

\title[Diffusion operator with random jumps from the boundary]{\textsc{Spectral analysis of the multi-dimensional diffusion operator with random jumps from the boundary}}

\author{David Krej\v{c}i\v{r}\'{i}k}
\address{D.~Krej\v{c}i\v{r}\'{i}k, Department of Mathematics, Faculty of Nuclear Sciences and Physical Engineering, Czech Technical University in Prague, Trojanova 13, 120 00, Prague, Czech Republic}
\email{david.krejcirik@fjfi.cvut.cz}

\author{Vladimir Lotoreichik}
\address{V.~Lotoreichik, Department of Theoretical Physics, Nuclear Physics Institute, 	Czech Academy of Sciences, 25068 \v Re\v z, Czech Republic}
\email{lotoreichik@ujf.cas.cz}

\author{Konstantin Pankrashkin}
\address{K.~Pankrashkin,
Carl von Ossietzky Universit\"at, Institut f\"ur Mathematik, 26111 Oldenburg, Germany}
\email{konstantin.pankrashkin@uol.de}

\author{Mat\v{e}j Tu\v{s}ek}
\address{M.Tu\v{s}ek, Department of Mathematics, Faculty of Nuclear Sciences and Physical Engineering, Czech Technical University in Prague, Trojanova 13, 120 00, 
Prague, Czech Republic}
\email{matej.tusek@fjfi.cvut.cz}

\begin{abstract}
We develop a Hilbert-space approach to the diffusion process
of the Brownian motion in a bounded domain with random jumps from the boundary
introduced by Ben-Ari and Pinsky in 2007.
The generator of the process is introduced 
by a diffusion 
elliptic differential operator in the space of square-integrable functions, 
subject to non-self-adjoint and non-local boundary conditions 
expressed through a probability measure on the domain.
We obtain an expression for the difference
between the resolvent of the operator and that of its Dirichlet realization.
We prove that the numerical range 
is the whole complex plane, 
despite the fact that the spectrum is purely discrete and is contained in a half-plane.
Furthermore, for the class of absolutely continuous probability measures
with square-integrable densities
we characterise the adjoint operator and prove that the system of root vectors is complete.
Finally, under certain assumptions on the densities, 
we obtain enclosures for the non-real spectrum and find a sufficient condition 
for the non-zero eigenvalue with the smallest real part to be real.
The latter supports the conjecture of Ben-Ari and Pinsky that this eigenvalue is always real.
\end{abstract}

\maketitle


\section{Introduction}

Consider a Brownian motion in a bounded domain~$\Omega$
and wait until it hits a point of the boundary~$\partial\Omega$.
At the hitting time the Brownian particle gets restarted
at a point inside the domain according to 
a given probabilistic Radon measure~$\mu$ on~$\Omega$ 
and starts the diffusion afresh.
This process was introduced by Ben-Ari and Pinsky in~\cite{BP07,BP09}
and recently studied by Arendt, Kunkel and Kunze in~\cite{AKK16} 
(see also~\cite{K18} and references therein).

The underlying generator is given by an elliptic differential expression
on~$\Omega$ with a peculiar non-local boundary condition: it connects boundary values of functions with their values inside the domain. 
This generator can be realised 
as a closed linear operator in the Banach space $L^\infty(\Omega)$; see~\cite{AKK16, BP07}.

Our main aim is to rigorously define
the generator as a closed operator with compact resolvent
in the Hilbert space $L^2(\Omega)$. 
Furthermore, we compute its numerical range and analyse its basic spectral properties. 
In view of the conjecture from~\cite[Question~1]{BP07} 
about the reality
of the non-zero eigenvalue with the smallest real part for this operator, we are particularly interested in finding sufficient conditions for the low lying eigenvalues to be real.

\subsection{Setting and state of the art}	
Let $\Omega\subset\dR^d$, $d \ge 2$, be a bounded, connected, $C^2$-smooth 
open set
and let~$\mu$ be a probability measure on~$\Omega$.
Let $a\colon\Omega\to M_d(\dR)$ be a $d\times d$ symmetric matrix function with real-valued entries in $C^1(\overline{\Omega})$, such that
for some $c>0$ there holds
\begin{equation}
\label{axi}
\xi \cdot a(x)\xi \ge c \, |\xi|^2 
\quad\text{ for all $\xi\in \dR^d$ and all $x\in\Omega$,}
\end{equation}
where the dot $\cdot$ denotes the scalar product in~$\dR^d$.
The Brownian motion of interest~\cite{AKK16, BP07, BP09, K18} is generated by the following linear operator in the Banach space $L^\infty(\Omega)$:  
\begin{equation}\label{eq:operator}
	\begin{aligned}
		\sfA_\mu u & := -\nabla \cdot  a \,\nabla u,\\
		\dom\sfA_\mu &:= 
		\left\{u\in C(\ov\Omega)\cap W(\Omega)\colon \nabla \cdot a\,\nabla u \in L^\infty(\Omega),\, u|_{\p\Omega}= \int_\Omega u\dd\mu\right\},	
	\end{aligned} 
\end{equation}
where the auxiliary space $W(\Omega)$ is defined as
\[
	W(\Omega) := \bigcap_{p > 1} W^{2,p}_{\rm loc}(\Omega);  
\]
here $W^{2,p}_{\rm loc}(\Omg)$ is the local $L^p$-based second-order Sobolev space on $\Omg$. It is proved in~\cite[Thm.~1.6]{AKK16} (see also~\cite{BP09} for a probabilistic description) that $\sfA_\mu$ is a closed operator and that $-\sfA_\mu$ generates
a holomorphic contraction positive semigroup $\sfT_\mu(t)$, $t > 0$, in the Banach space $L^\infty(\Omega)$. According to~\cite[Thm.~4.8 and Cor.~5.8]{AKK16}, the operator $\sfA_\mu$ has only point spectrum, which lies in the half-plane $\{\lm\in\dC\colon \Re\lm \ge 0\}$. Moreover, the property $\s(\sfA_\mu)\cap \ii\dR = \{0\}$ holds.

Let us introduce the notation 
\[
	\lm_1(\mu) := \inf\big\{\Re\lm\colon\lm\in\s(\sfA_\mu)\sm\{0\}\big\} \ge 0 \,,
\]
with the convention that $\lm_1(\mu)  =\infty$ if $\s(\sfA_\mu) = \{0\}$
(notice that, in the most general case, 
it has not been proved that $\sfA_\mu$ possesses non-zero eigenvalues). 
According to~\cite[Thm.~1.3 (d) and Cor.~5.10\,(2)]{AKK16}, there is a non-negative function
\[
h \in L^1(\Omega), \quad \int_\Omega h(x) \dd x = 1,
\]
such that for any $\eps < \lm_1(\mu)$ there is a constant $M > 0$ 
such that
\begin{equation}\label{eq:largetime}
	\left\| \sfT_\mu(t) f - 
	\one\int_\Omega f(x)h(x)\dd x \right\|_\infty \le Me^{-\eps t}
	\text{ for all $t>0$ and all $f\in L^\infty(\Omega)$},
\end{equation}
where $\one$ is the constant function $x\mapsto 1$.
Thus, estimates of $\lm_1(\mu)$ are of 
clear probabilistic interest~\cite{BP07}. A refinement of the large time behavior~\eqref{eq:largetime} is obtained in~\cite[Thm. 1]{BP09} under extra assumptions. In particular, according to~\cite{BP09}, the function $h$ can be explicitly expressed through
the measure~$\mu$ and the Green's function corresponding to the Dirichlet realization of the operator. 

The following striking question posed by Ben-Ari and Pinsky in~\cite{BP07} remains open.
\begin{question}
	Is $\lm_1(\mu)$ an eigenvalue of $\sfA_\mu$?
\end{question}	
\noindent
In the case $0<\lm_1(\mu)<\infty$, the question means whether 
the non-zero eigenvalue with the smallest real part is real.

Sufficient conditions for the entire spectrum of $\sfA_\mu$ to be real are obtained in~\cite{BP07}. In particular, the spectrum of $\sfA_\mu$ is real for $\mu$ being the uniform probability measure on $\Omega$ and for 
\[
\dd\mu(x) = \frac{\chi_1(x)}{(\chi_1,\one)_{L^2(\Omega)}}\dd x,
\]
where
 $\chi_1> 0$ is the $L^2$-normalized ground-state of $-\nabla\cdot a\nabla$ with Dirichlet boundary condition.
An explicit example with non-real eigenvalues is constructed in~\cite[Rem.~1.4]{LLR}, however, in this example $\lm_1(\mu)$ is still a (real) eigenvalue.
The one-dimensional version of the problem is considered in the series of papers
\cite{LLR, KW11, B14, KK16,  Y18, SY19}. 
Surprisingly, the spectrum turns out to be always real~\cite[Thm.~1.2]{LLR} in that setting.
In~\cite{KK16} the reality of the spectrum was explained 
through a generalised similarity of the non-self-adjoint generator
to a self-adjoint operator in the Hilbert space $L^2(\Omega)$.

\subsection{Main results}
Assuming that $\mu$ is a Radon probability measure on $\Omega$ defining 
a continuous functional on the Sobolev space $H^2(\Omega)$, we introduce a linear operator 
\[
	\Op u := -\nabla \cdot a\,\nabla u,\qquad \dom\Op := \left\{u\in H^2(\Omega)\colon u|_{\p\Omg}=\int_\Omega u \dd \mu \right\},
\]
in the Hilbert space $L^2(\Omega)$.
The operator~$\Op$
can be viewed as an extension of $\sfA_\mu$ to a larger space $L^2(\Omega)\supset L^\infty(\Omega)$.  We check that $\Op$ is densely defined, closed, and non-self-adjoint and that the spectral data of $\sfH_\mu$  are
the same as those of $\sfA_\mu$; \ie, these operators have the same spectra and the same families of eigenfunctions.

Our goal is to perform a spectral analysis of~$\Op$
in the Hilbert-space setting.
In Theorem~\ref{thm0} we derive a Krein-type resolvent formula for the resolvent difference of $\Op$ and its Dirichlet counterpart on $\Omega$ 
and obtain a Birman--Schwinger-type characterisation of the spectrum for $\Op$.
As an easy consequence of that, we show that the spectrum of $\Op$ is purely discrete and invariant under complex conjugation. Furthermore, 
in Theorem~\ref{thm:numerical_range}
we establish that the numerical
range of $\Op$ is the whole complex plane.

Particular attention is then paid to the measures $\dd  \mu(x) = w(x)\dd x$ with $L^2$-densities $w$. For such measures we are able to compute and characterise the adjoint operator $\Op^*$  and to  prove that the system of root vectors of $\Op$ is complete in the Hilbert space $L^2(\Omg)$. It means, in particular, that the spectrum of $\Op$ consists of infinitely many isolated
points that accumulate at complex infinity. For $w$ being either a perturbation of the constant function $\frac{\one}{|\Omega|}$ or of the $L^1$-normalized ground-state of the Dirichlet realization of $(-\nabla\cdot a\,\nabla)$ we obtain enclosures on the non-real eigenvalues; \cf Theorems~\ref{thm1} and~\ref{thm3}.  

Next, we discuss these enclosures in more detail.
Let $\{\lm_k\}_{k=1}^\infty$ be the Dirichlet eigenvalues of $(-\nabla\cdot a\,\nabla)$ on $\Omega$
enumerated in the non-decreasing order and counted with multiplicities. Let $\{\chi_k\}_{k=1}^\infty$ be the respective real-valued eigenfunctions
normalised to~$1$ in $L^2(\Omega)$, chosen so that  $\chi_1 > 0$.
We consider two special classes of measures.

\subsubsection*{Class I}
For the measures of the structure 
\begin{equation} \label{eq:measure1}
	\boxed{\dd\mu(x)
= \left(\frac{\one(x)}{|\Omega|} + v(x) \right)\dd x}
\end{equation}
with real-valued $v\in L^2(\Omega)$ satisfying, for some $k\in\dN$,
\begin{equation}\label{eq:v}
	 v \ge -\frac{\one}{|\Omega|},\quad
	 \int_\Omega v(x)\dd x= 0,\quad
	 \int_\Omega v^2(x)\dd x < \
	 \frac{1}{|\Omega|^2}
	 \min_{1\le n\le k}\left\{\left(\int_\Omega \chi_n(x)\dd x\right)^2\right\},
\end{equation}
we obtain in Theorem~\ref{thm1} that there are no eigenvalues of $\sfH_\mu$ in the set
\begin{equation*}
	\mathbb{H}_k 
	= \left\{\lm\in\dC\setminus\dR\colon \Re \lm \le \frac{\lm_k+ \lm_{k+1}}{2}\right\}.
\end{equation*}
As a result, all eigenvalues $\lambda$ of $\Op$ with $\Re \lambda \le \frac{\lm_k+ \lm_{k+1}}{2}$
are real.

The analysis of these eigenvalues 
in Theorem~\ref{thm2} yields, under the additional assumption of
simplicity of the eigenvalues $\lm_1,\lm_2,\dots,\lm_k$ 
in the spectrum of $\OpD$,
that in each interval $(\lm_i,\lm_{i+1})$, $i=1,2,\dots, k-1$ there is exactly one
eigenvalue of $\Op$. In particular,	
under the assumption~\eqref{eq:v} on $v$ with $k= 2$ and in the case that $\lm_2$ is a simple
eigenvalue of $\OpD$
we conclude in Proposition~\ref{prop:real} with an extra argument that $\lm_1(\mu)$ 
is a real eigenvalue
and obtain an estimate on it.
These results are interesting to compare with~\cite[Thm. 4\,(i)]{BP07}, where it is proved that all the eigenvalues of $\Op$ are real, 
provided that the numerical sequence 
\[
  \xi_n := (\one,\chi_n)_{L^2(\Omg)}\int_\Omg \chi_n(x) \dd\mu(x),\qquad n\ge 2,
\] 
satisfies either $\xi_n \ge 0$ or $\xi_n \le 0$ for all $n\ge 2$. The conclusion that we get is weaker, but on the other hand our
assumption on the measure involves only eigenfunctions corresponding to low eigenvalues. 

\subsubsection*{Class II}
For the measures of the structure 
\begin{equation} \label{eq:measure2}
\boxed{\dd\mu(x)
= \left(\frac{\chi_1(x)}{(\chi_1,\one)_{L^2(\Omega)}} + v(x)\right)\dd x}
\end{equation} 
with real-valued $v\in L^2(\Omega)$ satisfying 
\begin{equation}\label{eq:v2}
v \ge -\frac{\chi_1}{(\chi_1,\one)_{L^2(\Omega)}},\qquad
\int_\Omega v(x)\dd x= 0,\qquad
\int_\Omega v^2(x)\dd x < \frac{1}{|\Omega|},
\end{equation}
we obtain in Theorem~\ref{thm3} the enclosure
\begin{equation}\label{eq:encl_intro}
	\s(\Op)\sm [0,\infty) \subset\left\{\lm\in\dC\sm\dR\colon\frac{\dist(\lm,\s(\OpD))}{|\lm_1-\lm|} \le |\Omega|^{1/4}\|v\|_{L^2(\Omega)}^{1/2}\right\}, 
\end{equation}
\ie, the non-real spectrum of $\Op$ belongs to a neighbourhood of $\s(\OpD)$, the size of which is controlled by the norm of $v$ in $L^2(\Omega)$, see Figure~\ref{fig}.
\begin{figure}[h] 
 \centering
 \includegraphics[width=9cm]{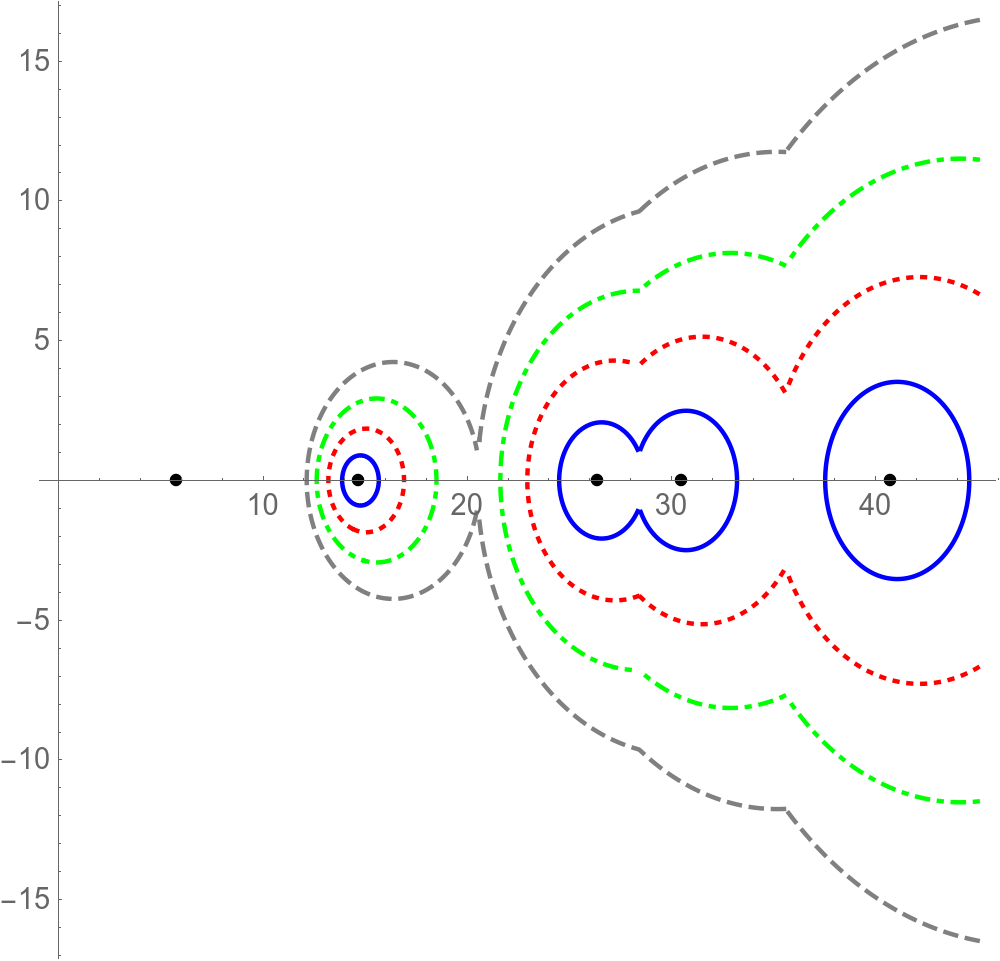} 
 \caption{Matryoshka-type spectral enclosure~\eqref{eq:encl_intro} obtained in Theorem~\ref{thm3}. We chose the unit disk for $\Omega$,
$a(x):=\diag(1,\dots,1)$
 and put $|\Omg|^{1/4}\|v\|^{1/2}_{L^2(\Omg)}=0.1$ (the blue solid line), $0.2$ (the red dotted line), $0.3$ (the green dashed-dotted line), $0.4$ (the gray dashed line). The black points are the Dirichlet eigenvalues.}
 \label{fig}
\end{figure}
\subsection{Structure of the paper}
The paper is organized as follows. In Section~\ref{sec:def} we define the operator $\Op$, derive a Krein-type resolvent formula and analyse basic properties of $\Op$. The numerical range of $\Op$ is studied in Section~\ref{sec:numrange}. 
The case of $\dd\mu(x) = w(x)\dd x$ with a square-integrable
density~$w$ is considered in Section~\ref{sec:L2}, in which also the adjoint of $\Op$ is investigated.
The enclosures for the non-real spectrum of $\Op$ and estimates for the low lying real eigenvalues are obtained in Section~\ref{sec:enclosures}. 
Namely, the measures of the structures~\eqref{eq:measure1} and~\eqref{eq:measure2} are analysed in Subsections~\ref{ssec:uniform} and~\ref{ssec:chi1}, respectively. 

\section{Definition of the operator and Krein-type resolvent formula}
\label{sec:def}
Let $\Omg\subset\dR^d$ be a bounded 
$C^{2}$-domain with the boundary $\p\Omg$
and $\mu$ be a Radon probability measure on $\Omg$ defining a continuous functional
\[
	H^2(\Omg)\ni u \mapsto 
	\langle u\rangle_\mu
	:= \int_\Omg u \,\dd \mu. 
\] 
\begin{remark}
An important subclass of such measures
is given by $\dd\mu(x) = w(x)\dd x$ with
a non-negative real-valued function $w \in L^2(\Omg)$ with $\|w\|_{L^1(\Omega)}=1$. Another interesting subclass is of the type
\[
	\mu(U) := \frac{1}{|\Sg|} \int_{U\cap \Sg} \dd \s, \qquad U\subset\Omg,
\]
where $\Sg\subset\Omg$ is a $C^\infty$-smooth
closed hypersurface and $\dd\s$ is the $(d-1)$-dimensional
Hausdorff measure on it. Let us also point out that  in dimensions $d = 2, 3$ we can treat the Dirac measure supported on a point in $\Omega$.
\end{remark}

We consider a linear operator $\Op$ defined as follows:
\begin{equation}\label{eq:Op}
	\Op u := -\nabla \cdot a \,\nabla u,\qquad
	\dom\Op := 
	\left\{ u \in H^2(\Omg)\colon
	\langle u \rangle_\mu = u|_{\p\Omg}\right\}.
\end{equation}
\begin{remark}
Note that at the beginning we could have assumed that $\mu$ is supported on $\overline\Omega$ but not exclusively on $\partial\Omega$. 
However, this apparently more general setting is actually
already contained in the definition above, without loss of generality. 
Indeed,
let $\mu=\mu_1+\mu_2$, where $\mu_1$ and $\mu_2$ are supported on $\Omega$ and $\partial\Omega$, respectively, and $\mu_2(\partial\Omega)<1$. Then the boundary condition for $\dom\Op$, where we integrate over $\overline\Omg$, reads 
$$u|_{\p\Omg}=\int_\Omega u\dd\mu_1+\mu_2(\p\Omg)u|_{\p\Omg},$$
i.e. $\langle u \rangle_{\tilde\mu} = u|_{\p\Omg}$ with the new probability measure
$\tilde\mu=\mu_1/(1-\mu_2(\p\Omg))$ supported on $\Omega$.
\end{remark}
\begin{remark}
	We also remark that in~\cite{AKK16} a more general boundary condition of the form
	\[
		u(x) = \int_\Omg u(y) \, \dd \mu_x(y), \quad x\in\p\Omg,
	\]
	for a position-dependent measure $\mu_x$ is considered. It reduces to the boundary condition in~\eqref{eq:Op} provided that $\mu_x = \mu$ for all $x\in\p\Omg$.
\end{remark}
\begin{prop} \label{prop:density}
$\Op$ is densely defined in $L^2(\Omega)$.
\end{prop}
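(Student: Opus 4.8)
The plan is to show that the domain of $\Op$ contains a dense subspace of $L^2(\Omega)$, and the natural candidate is the space $C_c^\infty(\Omega)$ of smooth functions compactly supported inside $\Omega$, which is of course dense in $L^2(\Omega)$. The obstacle is that a generic $\varphi\in C_c^\infty(\Omega)$ does \emph{not} lie in $\dom\Op$: although $\varphi|_{\p\Omg}=0$, in general $\langle\varphi\rangle_\mu=\int_\Omg\varphi\,\dd\mu\neq 0$, so the non-local boundary condition in~\eqref{eq:Op} fails. The remedy is to correct $\varphi$ by a fixed smooth function supported near the boundary.

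First I would fix, once and for all, a function $\psi\in C^\infty(\ov\Omg)$ with $\psi|_{\p\Omg}\equiv 1$ and $\supp\psi$ contained in an arbitrarily thin collar neighbourhood of $\p\Omg$ (such $\psi$ exists because $\Omg$ is a bounded $C^2$-domain, hence has a tubular neighbourhood of the boundary; one takes $\psi$ to be a cutoff of the signed distance function, or simply any smooth extension of the constant $1$ from $\p\Omg$ that vanishes away from the boundary). Since $\psi\in C^\infty(\ov\Omg)\subset H^2(\Omg)$ and $\psi|_{\p\Omg}=1$, for any scalar $c\in\dC$ the function $c\,\psi$ satisfies $(c\psi)|_{\p\Omg}=c$. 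Now, given $\varphi\in C_c^\infty(\Omg)$, set
\[
 u_\varphi := \varphi + \frac{\langle\varphi\rangle_\mu - \langle\psi\rangle_\mu^{-1}\cdot 0}{?}\,\psi,
\]
— more cleanly: choose the constant $c_\varphi\in\dC$ so that $u_\varphi:=\varphi+c_\varphi\psi$ obeys the boundary condition. On the boundary $u_\varphi|_{\p\Omg}=0+c_\varphi=c_\varphi$, while $\langle u_\varphi\rangle_\mu=\langle\varphi\rangle_\mu+c_\varphi\langle\psi\rangle_\mu$; equating these gives $c_\varphi(1-\langle\psi\rangle_\mu)=\langle\varphi\rangle_\mu$. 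Here $\langle\psi\rangle_\mu=\int_\Omg\psi\,\dd\mu$, and since $\psi$ is supported in a thin collar $V$ of the boundary while $\mu$ is a probability measure on $\Omg$, we can arrange at the outset that $\mu(V)<1$ (because $\mu(\p\Omg)=0$, as $\mu$ is a measure \emph{on} $\Omg$, so $\mu$ of a sufficiently thin open collar can be made strictly less than $1$); with $0\le\psi\le 1$ this yields $|\langle\psi\rangle_\mu|\le\mu(V)<1$, hence $1-\langle\psi\rangle_\mu\neq 0$ and $c_\varphi:=\langle\varphi\rangle_\mu/(1-\langle\psi\rangle_\mu)$ is well defined. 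Then $u_\varphi\in H^2(\Omg)$ and $\langle u_\varphi\rangle_\mu=u_\varphi|_{\p\Omg}$, so $u_\varphi\in\dom\Op$.

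It remains to verify density. Given $f\in L^2(\Omg)$ and $\eta>0$, pick $\varphi\in C_c^\infty(\Omg)$ with $\|f-\varphi\|_{L^2(\Omg)}<\eta$; then
\[
 \|f-u_\varphi\|_{L^2(\Omg)}\le\|f-\varphi\|_{L^2(\Omg)}+|c_\varphi|\,\|\psi\|_{L^2(\Omg)}
 <\eta+\frac{|\langle\varphi\rangle_\mu|}{|1-\langle\psi\rangle_\mu|}\,\|\psi\|_{L^2(\Omg)}.
\]
By the assumed continuity of $u\mapsto\langle u\rangle_\mu$ on $H^2(\Omg)$ and the fact that $C_c^\infty(\Omg)$ functions can be chosen small in $H^2$ as well — or, more simply, by choosing $\supp\psi$ inside a collar $V$ of measure $\mu(V)$ as small as we like so that $\|\psi\|_{L^2(\Omg)}$ is small — the correction term is controlled. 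Concretely: fix $\psi$ so that $\|\psi\|_{L^2(\Omg)}$ and $\mu(\supp\psi)\le\tfrac12$ are both small, so that $|1-\langle\psi\rangle_\mu|\ge\tfrac12$; then $|c_\varphi|\le 2|\langle\varphi\rangle_\mu|$, and since $\varphi$ approximates $f$ in $L^2$ we may, by a standard truncation/mollification, additionally ensure $|\langle\varphi\rangle_\mu|$ stays bounded (it converges to $\langle f\rangle_\mu$ if $f\in H^2$, and in general we just need it finite for each fixed $\varphi$). Letting $\eta\to 0$ and re-selecting $\psi$ thinner if necessary shows $u_\varphi\to f$ in $L^2(\Omg)$, so $\dom\Op$ is dense. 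The only point requiring a little care — and the main (mild) obstacle — is the justification that $\mu$ charges arbitrarily thin collars of $\p\Omg$ by less than $1$; this follows from $\mu(\p\Omg)=0$ together with downward continuity of the measure along a decreasing sequence of collars shrinking to $\p\Omg$. $\qed$
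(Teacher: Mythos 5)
Your argument is correct and is essentially the paper's own proof: both correct a compactly supported smooth approximant $\varphi$ by a multiple $c\,\psi$ of a smooth collar function equal to $1$ on $\p\Omg$, with $c=\langle\varphi\rangle_\mu/(1-\langle\psi\rangle_\mu)$, and both shrink the collar so that $\|\psi\|_{L^2(\Omg)}\to 0$ while $\langle\psi\rangle_\mu\to 0$ (the paper via dominated convergence for mollified indicators $g_n=\one_{2/n}*f_{1/n}$, you via downward continuity of $\mu$ on collars, which is the same fact). The only differences are cosmetic (cutoff of the distance function versus a mollified indicator), and your quantifier order --- fix $\varphi$ first, then thin the collar --- matches the paper's, so no gap.
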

\begin{proof}
Let $\varphi\in L^2(\Omega)$ and $\varepsilon>0$, then one can find $\psi\in C_0^\infty(\Omega)$
which satisfies  $\|\varphi-\psi\|_{L^2(\Omega)}<\frac{\varepsilon}{2}$.

Let us define the double-sided open neighbourhood of $\p\Omg$ as 
	\[
		\Omega_\eps:=\{x\in\dR^d:\, \dist(x,\partial\Omega)<\eps\}.
	\]
Furthermore, let 
$\one_\eps$ 
be the indicator function of $\Omega_\eps$, and let $(f_\eps)_{\eps>0}$ be the standard family of mollifiers in $\dR^d$; \cf \cite[Sec. 7.2]{GT}. Then the convolution
	\[
		g_n:=\one_{\frac{2}{n}}*f_{\frac{1}{n}},\qquad n\in\dN, 
	\]
	is a $C_0^\infty(\dR^d)$-function with values in $[0,1]$ that equals $1$ on $\Omega_{\frac{1}{n}}$ and is zero away from $\Omega_{\frac{3}{n}}$.
	Hence, $|g_n|\le 1\in L^1(\Omega,\dd\mu)$, and for any $x\in\Omega$ one has $\lim_{n\to \infty}g_n(x)=0$.
	Then the dominated convergence implies
	\begin{equation} \label{eq:gn}
	\lim_{n\to\infty}\int_\Omega g_n\dd\mu=0.
	\end{equation}
	
	Put
	\[
	\psi_n:=\psi+C_n g_n\vert_\Omega\quad\text{with}\quad C_n:=\dfrac{\int_\Omega\psi\dd\mu}{1-\int_\Omega g_n\dd\mu}.
	\]
	Clearly, $\psi_n\in H^2(\Omega)$ and $\langle\psi_n\rangle_\mu=C_n=\psi_n\vert_{\partial\Omega}$, 
	\emph{i.e.}, $\psi_n\in\dom\Op$. For every sufficiently large $n$ one has
	\[
	\Big|\int_\Omega g_n\dd\mu\Big|\le\dfrac{1}{2}, \quad |C_n|\le 2 \Big|\int_\Omega\psi\dd\mu\Big|,
	\]
    due to \eqref{eq:gn}. Hence, we get
	\[
		\|\psi-\psi_n\|_{L^2(\Omega)}= |C_n|\|g_n\|_{L^2(\Omega)}\leq 2 \Big|\int_\Omega\psi\dd\mu\Big| \|\one_{\frac{3}{n}}\|_{L^2(\Omega)}
		\le 2 \Big|\int_\Omega\psi\dd\mu\Big| \, \sqrt{|\Omega_{\frac{3}{n}}|},
	\]
	where $\one_{\frac{3}{n}}$ stands for the indicator function of $\Omega_{\frac{3}{n}}$.
	 Since $\lim_{n\to\infty}|\Omega_{\frac{3}{n}}|=0$, one can choose $N$ to have $\|\psi-\psi_N\|_{L^2(\Omega)}<\frac{\varepsilon}{2}$, and then
	\[
	\|\varphi-\psi_N\|_{L^2(\Omega)}\le 	\|\varphi-\psi\|_{L^2(\Omega)}+	\|\psi-\psi_N\|_{L^2(\Omega)}< \dfrac{\varepsilon}{2}+\dfrac{\varepsilon}{2}=\varepsilon.
	\]
	As $\varphi\in L^2(\Omega)$ and $\varepsilon>0$ were arbitrary and $\psi_N\in \dom\Op$, the claim follows.
\end{proof}

Now, introduce the Dirichlet realization $\OpD$ of $(-\nabla\cdot a\,\nabla u)$
on $\Omg$,
\begin{equation}\label{eq:OpD}
	\OpD u := -\nabla\cdot a\,\nabla u,\qquad
	\dom\OpD := H^2(\Omg)\cap H^1_0(\Omg),
\end{equation}	
which is self-adjoint in the Hilbert space $L^2(\Omg)$ (it is obvious
that the operator is symmetric, then the self-adjointness follows from its surjectivity, see Lemma~\ref{lem:regularity} below). 
With this definition,
the domain of $\Op$ can be alternatively described as
\begin{equation}\label{eq:Op_dom}
	\dom\Op = 
	\left\{ u = u_0 +c \colon
	u_0 \in \dom\OpD,
	\langle u_0 \rangle_\mu = 0,
	c \in \dC\right\}.
\end{equation}
For the sake of convenience, we employ the 
abbreviation
\[
	\sfR_{\rm D}(\lm) := (\OpD - \lm)^{-1},
	\qquad
	\lm\in\rho(\OpD),
\]
and introduce the following complex-valued function
\[
	\dC\sm\s(\OpD)\ni\lm\mapsto \sfm_\mu(\lm)
	:= \langle\RD(\lm)\one\rangle_\mu.
\]
Recall that the resolvent~$\sfR_{\rm D}(\lm)$ is compact. Let
$\{\lambda_n\}_{n=1}^{\infty}$ stand for the eigenvalues of $\OpD$ counted with multiplicities in non-decreasing order 
and let $\{\chi_n\}_{n=1}^{\infty}$ be the corresponding real-valued orthonormal eigenfunctions of $\OpD$. 
The ground state~$\chi_1$ can be chosen to be strictly positive in $\Omega$.
Then we can write
\begin{equation} \label{eq:mSum}
\sfm_\mu(\lm)=\Big\langle
\RD(\lm)\sum_{n=1}^{+\infty}(\one,\chi_n)_{L^2(\Omg)}\chi_n
\Big\rangle_{\!\mu}=\sum_{n=1}^{+\infty}\frac{(\one,\chi_n)_{L^2(\Omg)} \langle\chi_n\rangle_\mu}{\lambda_n-\lambda}.
\end{equation}
Note that the sum in the first equality converges in the $L^2$-norm: recall that $\langle\cdot\rangle_\mu$ 
is assumed to be continuous on $H^2(\Omega)$, 
and then $\langle\RD(\lm)\cdot\rangle_{\mu}$ is continuous on $L^2(\Omega)$.
This justifies the second equality in \eqref{eq:mSum}. Moreover, we see that $\sfm_\mu$ is meromorphic on $\dC$ and not everywhere zero. 
Indeed, assuming on the contrary that $\sfm_\mu\equiv0$ identically, 
then in particular
\[
	(\one,\chi_1)_{L^2(\Omg)} \langle\chi_1\rangle_\mu= \lim_{\lm\arr\lm_1}(\lm_1-\lm)\sfm_\mu(\lm) = 0,
\]	
which contradicts the positivity of~$\chi_1$.
Similarly, employing the identity theorem from complex analysis, one can infer that the set of zeros of $\sfm_\mu$ has no finite accumulation point and, therefore, it is at most countable.

Now we are ready to formulate the theorem,
which contains basic properties of $\Op$
and a Krein-type resolvent identity.
\begin{thm}\label{thm0}
	Let the operators $\Op$
	and $\OpD$ be as in~\eqref{eq:Op}
	and in~\eqref{eq:OpD}, respectively. 
	Define the set  
	$\cS_\mu := 
	\{\lm\in\dC\sm\s(\sfH_{\rm D})\colon \lm\sfm_\mu(\lm) \ne 0\}$.	Then the following
	hold.
	\begin{myenum} 
	\item 
	For any $\lm \in
	\cS_\mu $ the operator
	\begin{equation} \label{eq:resolvent}
	\sfR_\mu(\lm) 
	:= 
	(\Op - \lm)^{-1}
	=
	\sfR_{\rm D}(\lm) 	-
	\left(\lm\sfR_{\rm D}(\lm)\one  + \one\right)
	\frac{\langle\sfR_{\rm D}(\lm)\,\cdot\rangle_\mu}{\lm\sfm_\mu(\lm)} 
	\end{equation}
	is well defined and compact in $L^2(\Omg)$.
	In particular,
	$\cS_\mu\subset\rho(\Op)$, the linear operator $\Op$ is closed, and its
	spectrum  is purely discrete.
	\item $\ker\Op = \spn\{\one\}$.
	\item	
	For any $\lm\in\rho(\OpD)\setminus\{0\}$,
	$\sfm_\mu(\lm) = 0$ if, and only if,
	$\lm\in\sd(\Op)$. In the positive case,
	the geometric multiplicity of $\lm$ is one and 
    $\lm\RD(\lm)\one +\one$ is (up to a multiplicative constant) the eigenfunction
	of $\Op$ corresponding to the eigenvalue~$\lm$. 
\end{myenum}
\end{thm}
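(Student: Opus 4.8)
The plan is to verify the Krein-type identity~\eqref{eq:resolvent} directly: I would fix $\lm\in\cS_\mu$ and show that the operator $\sfR_\mu(\lm)$ on the right-hand side maps $L^2(\Omg)$ into $\dom\Op$, that $(\Op-\lm)\sfR_\mu(\lm)=I$, and that $\sfR_\mu(\lm)(\Op-\lm)=I$ on $\dom\Op$. For the mapping property, given $f\in L^2(\Omg)$, write $u:=\sfR_\mu(\lm)f = \sfR_{\rm D}(\lm)f - (\lm\sfR_{\rm D}(\lm)\one+\one)\,\frac{\langle\sfR_{\rm D}(\lm)f\rangle_\mu}{\lm\sfm_\mu(\lm)}$. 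Since $\sfR_{\rm D}(\lm)f\in\dom\OpD\subset H^2(\Omg)$ has zero trace, and $\one$ has constant trace $1$ and $\sfR_{\rm D}(\lm)\one\in\dom\OpD$ has zero trace, one computes $u|_{\p\Omg} = -\frac{\langle\sfR_{\rm D}(\lm)f\rangle_\mu}{\lm\sfm_\mu(\lm)}$ and, applying $\langle\cdot\rangle_\mu$ to $u$ and using $\langle\RD(\lm)\one\rangle_\mu=\sfm_\mu(\lm)$, that $\langle u\rangle_\mu$ equals the same quantity; hence $u\in\dom\Op$. Then, since $(-\nabla\cdot a\nabla-\lm)\one = -\lm\one$ and $(-\nabla\cdot a\nabla-\lm)\RD(\lm) = I$, a short algebraic cancellation gives $(\Op-\lm)u = f$. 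The reverse identity is handled similarly, or—more cleanly—once injectivity of $\Op-\lm$ is checked (if $(\Op-\lm)v=0$ then $v=c_0(\lm\RD(\lm)\one+\one)$ for some constant $c_0$ forced by the structure of $\dom\Op$, and the boundary/$\mu$ condition then forces $c_0\sfm_\mu(\lm)=0$, so $v=0$ when $\lm\sfm_\mu(\lm)\neq0$), surjectivity plus injectivity of $\Op-\lm$ together with the explicit bounded right inverse yield $\lm\in\rho(\Op)$ and $(\Op-\lm)^{-1}=\sfR_\mu(\lm)$. Compactness of $\sfR_\mu(\lm)$ follows from compactness of $\sfR_{\rm D}(\lm)$ and the fact that the correction term is finite-rank (rank one). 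Since $\cS_\mu$ is nonempty (indeed $\sfm_\mu\not\equiv0$, so $\lm\sfm_\mu(\lm)\neq0$ for all $\lm\in\dC\sm\s(\OpD)$ outside an at most countable set containing $0$), $\Op$ has nonempty resolvent set, hence is closed; and discreteness of the spectrum follows from compactness of one—hence every—resolvent.

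For part~(ii), the inclusion $\spn\{\one\}\subset\ker\Op$ is immediate since $\one\in\dom\Op$ (its trace equals $1=\langle\one\rangle_\mu$) and $-\nabla\cdot a\nabla\one=0$. Conversely, if $u\in\dom\Op$ with $\Op u=0$, write $u=u_0+c$ with $u_0\in\dom\OpD$, $\langle u_0\rangle_\mu=0$, as in~\eqref{eq:Op_dom}; then $\OpD u_0 = -\nabla\cdot a\nabla u_0 = -\nabla\cdot a\nabla u = 0$, and since $0\notin\s(\OpD)$ (the Dirichlet eigenvalues are strictly positive as $\chi_1>0$) we get $u_0=0$, so $u=c\one$.

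Part~(iii) is the Birman–Schwinger-type statement and is the step I expect to require the most care. Fix $\lm\in\rho(\OpD)\sm\{0\}$. If $\sfm_\mu(\lm)=0$, set $\varphi_\lm:=\lm\RD(\lm)\one+\one$. One checks $\varphi_\lm\neq0$ (it has trace $1$, being $0$ on $\p\Omg$ from $\RD(\lm)\one$ plus $1$ from $\one$—wait, more precisely its trace is $1$, so it is not the zero function), that $\varphi_\lm\in H^2(\Omg)$, that $\varphi_\lm|_{\p\Omg}=1$ and $\langle\varphi_\lm\rangle_\mu = \lm\,\sfm_\mu(\lm)+\langle\one\rangle_\mu = 0+1 = 1$, so the non-local boundary condition holds and $\varphi_\lm\in\dom\Op$; and $(-\nabla\cdot a\nabla-\lm)\varphi_\lm = \lm\cdot I\cdot\one \cdot(\text{from }\RD) - \lm\one = 0$ after the cancellation $(-\nabla\cdot a\nabla-\lm)(\lm\RD(\lm)\one) = \lm\one$ and $(-\nabla\cdot a\nabla-\lm)\one=-\lm\one$. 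Hence $\lm\in\sd(\Op)$ with eigenfunction $\varphi_\lm$. For the converse, if $\lm\in\sd(\Op)$, take $0\neq v\in\dom\Op$ with $(\Op-\lm)v=0$; decompose $v=v_0+c$ via~\eqref{eq:Op_dom}, so $(\OpD-\lm)v_0 = -\nabla\cdot a\nabla v - \lm v_0 = \lm v - \lm v_0 = \lm c$ (using $\Op v=\lm v$, i.e. $-\nabla\cdot a\nabla v = \lm v$, and $v=v_0+c$). Thus $v_0 = \lm c\,\RD(\lm)\one$, so $v = \lm c\,\RD(\lm)\one + c = c\,\varphi_\lm$; since $v\neq0$, necessarily $c\neq0$, and then the constraint $\langle v_0\rangle_\mu=0$ reads $\lm c\,\sfm_\mu(\lm)=0$, forcing $\sfm_\mu(\lm)=0$ (as $\lm\neq0$, $c\neq0$). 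This simultaneously shows that every eigenfunction is a scalar multiple of $\varphi_\lm$, i.e. the geometric multiplicity is one. The main obstacle, as usual in these arguments, is bookkeeping the traces and the $\langle\cdot\rangle_\mu$ functional carefully—in particular making sure every vector to which $\langle\cdot\rangle_\mu$ is applied genuinely lies in $H^2(\Omg)$ so the continuous-functional hypothesis applies—and handling the borderline value $\lm=0$ (excluded in~(iii), treated separately in~(ii)).
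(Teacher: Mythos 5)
Your proposal is correct and follows essentially the same route as the paper: the paper derives \eqref{eq:resolvent} by solving $(\Op-\lm)u=v$ through the decomposition \eqref{eq:Op_dom} (which yields injectivity and surjectivity simultaneously), whereas you verify the formula and check injectivity separately, but the computations, the rank-one compactness argument, and the treatment of (ii) and (iii) via $u=u_0+c$ are identical in substance. The only cosmetic slip is attributing $0\notin\s(\OpD)$ to the positivity of $\chi_1$ rather than to ellipticity; this does not affect the argument.
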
	

\begin{proof}
    (i) Firstly, note that $\cS_\mu\neq\emptyset$. 
	Let $\lm\in\cS_\mu$ be fixed and let $v\in L^2(\Omg)$ be arbitrary.
	Consider the resolvent equation
	$(\Op - \lm)u = v$. Using the representation~\eqref{eq:Op_dom} of $\dom\Op$,
	we find for $u = u_0 + c\in\dom\Op$
	with $u_0 \in \dom\OpD$ and $c\in\dC$ that
	$\OpD u_0  - \lm u_0 = v + \lm c$.
	Hence, we get
	\begin{equation}\label{eq:u0}
		u_0 = \RD(\lm) v + \lm c\RD(\lm)\one.
	\end{equation}
	Furthermore, 
	employing that $\langle u_0\rangle_\mu = 0$,
	we obtain
	\[
		\langle\RD(\lm) v\rangle_\mu + \lm c\sfm_\mu(\lm) = 0.
	\] 
	Hence, the constant $c\in\dC$ can be expressed
	as follows
	\[
		c = 
		-\frac{ \langle\RD(\lm) v\rangle_\mu }
		{\lm\sfm_\mu(\lm)}.
	\]
	Substituting the above expression for $c$
	into~\eqref{eq:u0} we obtain
	the identity
	\[
		u_0 = \RD(\lm) v
		- \lm\RD(\lm)\one
		\frac{ \langle\RD(\lm) v\rangle_\mu }
		{\lm\sfm_\mu(\lm)}.
	\]
	Finally, we get
	\[
		u = u_0 + c
		=
		\RD(\lm) v
		- \lm\RD(\lm)\one
		\frac{ \langle\RD(\lm) v\rangle_\mu }
		{\lm\sfm_\mu(\lm)}
		-\frac{ \langle\RD(\lm) v\rangle_\mu }
		{\lm\sfm_\mu(\lm)}.
	\]
	Hence, we conclude that the operator
	$\Op - \lm$ is invertible and the inverse
	\[
		(\Op - \lm)^{-1} := 	\RD(\lm) 
		- \big(\lm\RD(\lm)\one + \one\big)
		\frac{ \langle\RD(\lm) \cdot\rangle_\mu }
		{\lm\sfm_\mu(\lm)}
	\] is everywhere defined
	in $L^2(\Omg)$.  
	
	Since the operator $(\Op -\lm)^{-1}$ is bounded and everywhere defined in $L^2(\Omg)$, we conclude that $\lm\in\rho(\Op)$. Therefore, the inclusion
	$\cS_\mu\subset\rho(\Op)$ holds and  $\Op$ is closed. 
	
	Recall that $\langle\RD(\lm)\cdot\rangle_\mu$ is a continuous functional on $L^2(\Omega)$.
	Therefore, the resolvent of $\Op$
	is a rank-one perturbation of the resolvent of $\OpD$. Hence, compactness of the resolvent of $\Op$ is a consequence of the same property for $\RD(\lm)$. Thus,
	discreteness of the spectrum for $\Op$
	follows.
	
	(ii) 
	First, it is easy to see that $\one \in\dom\Op$
	and that $\sfH_\mu \one = 0$. 
	Second, suppose now that $u\in\dom\Op$ and 
	$\Op u = 0$. By~\eqref{eq:Op_dom}, we can decompose $u = u_0 + c$ with $u_0 \in\dom\OpD$
	and $c\in\dC$. Hence, we end up with 
	$\OpD u_0 = 0$ and therefore $u_0 = 0$.
	Combining the above two observations,
	we get $\ker\Op = \spn\{\one\}$.
	
	(iii)
	Let $\lm\in\rho(\OpD)$ be such that
	$\sfm_\mu(\lm) = 0$. Set 
	$u := \lm\sfR_{\rm D}(\lm) \one +\one$.
	Clearly, $\langle u  \rangle_\mu = u|_{\p\Omg} = 1$
	and thus $u\in\dom\Op\setminus\{0\}$. Moreover,
	\[
	\begin{aligned}
		\sfH_\mu u
		& =
		\sfH_\mu 
		\left(\lm\sfR_{\rm D}(\lm) \one + \one\right)
		= \OpD 	
			\lm\sfR_{\rm D}(\lm) \one\\
			&=(\OpD-\lambda)\lm\sfR_{\rm D}(\lm) \one
		+
		\lm^2
		\sfR_{\rm D}(\lm) \one\\
		& =
		\lm\one + \lm^2\sfR_{\rm D}(\lm)\one  
		= \lm u.
	\end{aligned}
	\]
	Now, suppose that $\lm\in\sd(\Op)\cap\rho(\OpD)\setminus\{0\}$.
	Hence, there exists $u\in\dom\Op\setminus\{0\}$
	such that
	$\Op u = \lm u$. 
	By~\eqref{eq:Op_dom} we can decompose $u$ as follows
	%
	$u = u_0 + c.$
	%
	Since $\lm\notin\s(\OpD)$ we conclude that
	$c\ne 0$.
	Using that $\Op u = \lm u$ we get
	%
	$	\OpD u_0 = \lm u_0 + \lm c.$
	%
	Hence,
	%
$     
		u_0 = \lm c\RD(\lm)\one,
$
	%
	and, finally,
	%
$
		\lm c \sfm_\mu(\lm) = 0. 
$
\qedhere
\end{proof}

In the next two propositions we collect basic spectral properties of $\Op$.

\begin{prop}\label{prop:conj_ev}
	If $\lm\in\dC\sm\dR$ is an eigenvalue of $\Op$ with an eigenfunction $u$,
	then $\ov\lm$ is also an eigenvalue of $\Op$ with an eigenfunction $\ov u$. 
	Moreover, the geometric multiplicities of $\lm$ and $\ov\lm$ coincide.
\end{prop}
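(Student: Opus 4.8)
The plan is to exploit that the differential expression $-\nabla\cdot a\,\nabla$ has real-valued coefficients and that the functional $\langle\cdot\rangle_\mu$ is real, so that pointwise complex conjugation is a symmetry of the whole eigenvalue problem.

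First I would observe that the map $u\mapsto\ov u$ is an antilinear isometric involution of $L^2(\Omg)$ which sends $H^2(\Omg)$ onto itself. The key verification is that it also maps $\dom\Op$ onto $\dom\Op$: if $u\in\dom\Op$, then $\ov u\in H^2(\Omg)$ and, since $\mu$ is a (real) probability measure,
\[
\langle\ov u\rangle_\mu=\int_\Omg\ov u\,\dd\mu=\ov{\textstyle\int_\Omg u\,\dd\mu}=\ov{\langle u\rangle_\mu}=\ov{u|_{\p\Omg}}=\ov u|_{\p\Omg},
\]
so $\ov u\in\dom\Op$; applying the same computation to $\ov u$ shows the map is onto. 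Moreover, because the entries of $a$ are real-valued, $-\nabla\cdot a\,\nabla\ov u=\ov{-\nabla\cdot a\,\nabla u}$, i.e.\ $\Op\ov u=\ov{\Op u}$ for every $u\in\dom\Op$.

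Consequently, if $\Op u=\lm u$ with $u\ne 0$, then $\Op\ov u=\ov{\Op u}=\ov{\lm u}=\ov\lm\,\ov u$ and $\ov u\ne 0$, which gives the first assertion. For the multiplicities, the restriction of $u\mapsto\ov u$ to $\ker(\Op-\lm)$ is a well-defined antilinear bijection onto $\ker(\Op-\ov\lm)$, with inverse the conjugation on $\ker(\Op-\ov\lm)$. An antilinear bijection between complex vector spaces preserves dimension (it sends a basis to a basis), hence $\dim\ker(\Op-\lm)=\dim\ker(\Op-\ov\lm)$, which is precisely the claimed equality of geometric multiplicities.

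The argument is entirely elementary; the only point that genuinely uses the standing hypotheses is the stability of $\dom\Op$ under conjugation, which hinges on $\mu$ being real and on $a$ having real entries. I do not expect any step to present a real obstacle.
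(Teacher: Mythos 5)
Your proof is correct and follows essentially the same route as the paper: the paper's proof also observes that $\ov u\in\dom\Op$ with $\Op\ov u=\ov{\Op u}=\ov\lm\,\ov u$ and that conjugation is a bijection between $\ker(\Op-\lm)$ and $\ker(\Op-\ov\lm)$. You merely spell out the steps the paper labels as obvious (stability of $\dom\Op$ under conjugation via the reality of $\mu$ and of the entries of $a$, and the dimension-preserving property of an antilinear bijection), which is a welcome addition but not a different argument.
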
	
\begin{proof}
	Let $\lm\in\dC\sm\dR$ be an eigenvalue of $\Op$ with an eigenfunction $u\in \dom\Op\sm\{0\}$. Obviously, $\ov u \in\dom \Op$ and moreover,
	$\Op \ov u = \ov{\Op u} = \ov{\lm u} = \ov\lm \ov u.$
	Next, 
it is easy to see that the complex conjugation is a bijection between
	$\ker(\Op - \lm)$ and $\ker(\Op - \ov\lm)$. Hence, the geometric multiplicities of these eigenvalues coincide.
\qedhere
\end{proof}	

\begin{prop}\label{prop:0_lm1}
	$\s(\Op)\cap(-\infty,\lm_1) = \{0\}$, where $\lm_1$ is the lowest eigenvalue of $\OpD$.
\end{prop}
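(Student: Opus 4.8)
The plan is to combine the Birman--Schwinger-type description of $\s(\Op)$ from Theorem~\ref{thm0} with a positivity argument for the resolvent of $\OpD$. Since $\ker\Op=\spn\{\one\}$ by Theorem~\ref{thm0}(ii), the origin belongs to $\s(\Op)$, and it remains to check that $(-\infty,\lm_1)\setminus\{0\}\subset\rho(\Op)$. Because $\cS_\mu\subset\rho(\Op)$ by Theorem~\ref{thm0}(i) and $\lm_1=\min\s(\OpD)$ gives $(-\infty,\lm_1)\cap\s(\OpD)=\emptyset$, the whole assertion reduces to showing
\[
	\sfm_\mu(\lm)=\langle\RD(\lm)\one\rangle_\mu\neq 0\qquad\text{for every }\lm\in(-\infty,\lm_1);
\]
indeed, this guarantees $\lm\sfm_\mu(\lm)\neq 0$, hence $\lm\in\cS_\mu$, for all $\lm\in(-\infty,\lm_1)\setminus\{0\}$.

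To prove this I would establish the stronger statement $\sfm_\mu(\lm)>0$. The key ingredient is that $\OpD$ generates a positivity-improving semigroup $\{e^{-t\OpD}\}_{t>0}$ on $L^2(\Omg)$: positivity of the semigroup is the first Beurling--Deny criterion applied to the form $u\mapsto\int_\Omg a\nabla u\cdot\nabla\ov u$ (valid since $|\nabla|u||\le|\nabla u|$ a.e.), and its improvement of positivity is equivalent to irreducibility of this form, which follows from the connectedness of $\Omg$. Since $\lm_1=\min\s(\OpD)$, for $\lm<\lm_1$ the resolvent is the operator-norm convergent Laplace transform $\RD(\lm)=\int_0^\infty e^{\lm t}\,e^{-t\OpD}\,\dd t$ (the integrand has norm $e^{t(\lm-\lm_1)}$), so $\RD(\lm)$ is itself positivity improving. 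Applying it to $\one\ge 0$, $\one\not\equiv 0$, we obtain $\RD(\lm)\one>0$ almost everywhere in $\Omg$; moreover $\RD(\lm)\one$ solves $(-\nabla\cdot a\nabla-\lm)u=\one$ in $\Omg$, so by interior elliptic regularity (using $a\in C^1(\ov\Omg)$) it lies in $W^{2,p}_{\rm loc}(\Omg)$ for all $p$ and is in particular continuous in $\Omg$, whence $\RD(\lm)\one(x)>0$ for \emph{every} $x\in\Omg$. Therefore, for any Radon probability measure $\mu$ on $\Omg$,
\[
	\sfm_\mu(\lm)=\int_\Omg \RD(\lm)\one\,\dd\mu>0 .
\]

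It then remains only to assemble the pieces: for $\lm\in(-\infty,\lm_1)\setminus\{0\}$ we have $\lm\notin\s(\OpD)$ and $\lm\sfm_\mu(\lm)\neq 0$, hence $\lm\in\cS_\mu\subset\rho(\Op)$; together with $0\in\s(\Op)$ this gives $\s(\Op)\cap(-\infty,\lm_1)=\{0\}$.

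The step I expect to be the main obstacle is the upgrade from almost-everywhere positivity of $\RD(\lm)\one$ to pointwise positivity on $\Omg$, which matters because $\mu$ need not be absolutely continuous (it may, e.g., be supported on a hypersurface, or on a point when $d\in\{2,3\}$). This is handled by the interior continuity of $\RD(\lm)\one$ noted above; as an alternative one could invoke the strong maximum principle, which applies directly for $\lm\le 0$ and extends to $0<\lm<\lm_1$ via the Neumann series $\RD(\lm)=\sum_{k\ge 0}\lm^k\RD(0)^{k+1}$ (convergent for $|\lm|<\lm_1$ since $\|\RD(0)\|=\lm_1^{-1}$), all of whose terms are positivity improving, but the Laplace-transform argument treats every $\lm<\lm_1$ at once and is the path I would take.
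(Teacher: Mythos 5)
Your proposal is correct and follows essentially the same route as the paper: the paper likewise observes that $\RD(\lm)\one>0$ pointwise for $\lm<\lm_1$ by the positivity-improving property of the Dirichlet resolvent (citing Davies), concludes $\sfm_\mu(\lm)>0$, and then invokes the Birman--Schwinger characterisation of Theorem~\ref{thm0}. Your additional care about upgrading almost-everywhere to everywhere positivity (relevant precisely because $\mu$ may be singular) is a point the paper leaves implicit, but the argument is the same in substance.
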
	
\begin{proof}
	This statement is almost trivial. Indeed,
	for any $\lm\in (-\infty,\lm_1)$  one has the pointwise inequality
	$\RD(\lm)\one > 0$ due to the positivity improving of $\RD(\lm)$; \cf~\cite[Thm. 1.3.2 and Lem. 1.3.4]{D89}.
	Hence, $\sfm_\mu(\lm)>0$ and the claim follows from Theorem~\ref{thm0}\,(iii). 
\end{proof}

We use the resolvent formula to show explicitly the absence of the self-adjointness:
\begin{prop}\label{prop}
For any choice of $\mu$, the operator $\Op$ is not self-adjoint.
\end{prop}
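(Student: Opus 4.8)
The plan is to use the Krein-type resolvent identity~\eqref{eq:resolvent} from Theorem~\ref{thm0} to exhibit explicitly that the resolvent $\R(\lm)$ is not self-adjoint for a suitable real $\lm$ outside the spectrum, which immediately rules out self-adjointness of $\Op$. First I would fix a point $\lm\in\cS_\mu\cap\dR$ with $\lm<0$; such a point exists since, as shown in the proof of Proposition~\ref{prop:0_lm1}, for all $\lm<\lm_1$ one has $\sfm_\mu(\lm)>0$, so $\lm\sfm_\mu(\lm)\ne 0$ whenever $\lm\ne 0$. For such real $\lm$ the Dirichlet resolvent $\RD(\lm)$ is self-adjoint, so $\Op$ is self-adjoint if and only if the rank-one correction term
\[
	\sfK(\lm) := \left(\lm\RD(\lm)\one+\one\right)\frac{\langle\RD(\lm)\,\cdot\,\rangle_\mu}{\lm\sfm_\mu(\lm)}
\]
is self-adjoint.

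Next I would identify the functional $\langle\RD(\lm)\,\cdot\,\rangle_\mu$ with an inner product against a fixed vector. Writing $g_\lm$ for the (real-valued) Riesz representative, i.e. $\langle\RD(\lm)\varphi\rangle_\mu = (\varphi,g_\lm)_{L^2(\Omg)}$ for all $\varphi\in L^2(\Omg)$, the correction term becomes
\[
	\sfK(\lm)\varphi = \frac{1}{\lm\sfm_\mu(\lm)}\,(\varphi,g_\lm)_{L^2(\Omg)}\,f_\lm,
	\qquad f_\lm := \lm\RD(\lm)\one+\one.
\]
A rank-one operator $\varphi\mapsto (\varphi,g)f$ with real normalising constant is self-adjoint if and only if $f$ and $g$ are parallel. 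Therefore it suffices to show that $f_\lm$ and $g_\lm$ are \emph{not} proportional. For this I would compare their values on the boundary: by construction $f_\lm|_{\p\Omg}=1$ (the term $\lm\RD(\lm)\one$ lies in $\dom\OpD\subset H^1_0(\Omg)$, hence has zero trace), whereas $g_\lm\in\dom\OpD$ as well — indeed $g_\lm = \ov{\RD(\lm)^*\,(\text{density of }\mu)}$ in the case of an $L^2$-density, and more generally $g_\lm = \RD(\lm)\nu$ where $\nu$ is the representative obtained from continuity of $\langle\cdot\rangle_\mu$ on $H^2$, so $g_\lm\in\dom\OpD$ and thus $g_\lm|_{\p\Omg}=0$. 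Since $f_\lm$ has nonzero trace and $g_\lm$ has zero trace, they cannot be proportional, so $\sfK(\lm)$ is not self-adjoint and neither is $\Op$.

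The main obstacle is making the identification $g_\lm\in\dom\OpD$ (equivalently, that the Riesz representative of $\varphi\mapsto\langle\RD(\lm)\varphi\rangle_\mu$ has zero trace) rigorous for a general Radon measure $\mu$ defining only a continuous functional on $H^2(\Omg)$, since then $\mu$ need not be in $L^2$ and one cannot literally write $g_\lm=\RD(\lm)w$. A clean workaround is to avoid representatives altogether and argue directly: suppose $\Op=\Op^*$; then for every $\varphi,\psi\in L^2(\Omg)$ and real $\lm\in\cS_\mu$,
\[
	\big(\sfK(\lm)\varphi,\psi\big)_{L^2(\Omg)} = \big(\varphi,\sfK(\lm)\psi\big)_{L^2(\Omg)},
\]
i.e. $\langle\RD(\lm)\varphi\rangle_\mu\,(f_\lm,\psi) = \ov{\langle\RD(\lm)\psi\rangle_\mu}\,(\varphi,f_\lm)$. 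Choosing $\psi$ with $(f_\lm,\psi)=0$ but $\langle\RD(\lm)\psi\rangle_\mu\ne 0$ — possible unless $f_\lm$ is itself a representative of the functional $\langle\RD(\lm)\,\cdot\,\rangle_\mu$ — forces $(\varphi,f_\lm)=0$ for all such $\psi$ and all $\varphi$, a contradiction; and the exceptional case ($f_\lm$ representing the functional) is excluded because it would give $\langle\RD(\lm)\varphi\rangle_\mu=(\varphi,f_\lm)$, yet testing with $\varphi\in C_0^\infty(\Omg)$ supported near $\p\Omg$ shows the left side is small while $f_\lm\equiv 1$ near $\p\Omg$ keeps the right side bounded away from zero. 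I expect the first (representative) version to be cleanest when $\mu$ has an $L^2$-density and the second (direct) version to handle the general case; either way the self-adjointness fails for \emph{every} admissible $\mu$.
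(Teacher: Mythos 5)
Your overall strategy is the paper's own: pass to the resolvent at a real point $\lm\in\cS_\mu\cap(-\infty,0)$, use \eqref{eq:resolvent} to reduce self-adjointness of $\Op$ to self-adjointness of the rank-one term, and compare that term with its adjoint. The reduction is correct, and your trace comparison ($f_\lm|_{\p\Omg}=1$ versus $g_\lm|_{\p\Omg}=0$) does settle the case of an $L^2$-density, where $g_\lm=\RD(\lm)w\in\dom\OpD$. You also rightly flag that for a general admissible $\mu$ the Riesz representative $g_\lm$ need not lie in $\dom\OpD$ (nor even in $H^1$), so the trace argument does not apply there, and you correctly isolate the only remaining case: $g_\lm$ proportional to $f_\lm$, i.e.\ $\langle\RD(\lm)u\rangle_\mu=\bar c\,(u,f_\lm)_{L^2(\Omg)}$ for all $u\in L^2(\Omg)$ and some real $c\neq 0$.

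Your treatment of this exceptional case is where the proof breaks down. You propose to test with $\varphi\in C_0^\infty(\Omg)$ supported near $\p\Omg$ and claim that $\langle\RD(\lm)\varphi\rangle_\mu$ is small while $(\varphi,f_\lm)_{L^2(\Omg)}$ stays bounded away from zero because $f_\lm\approx\one$ near the boundary. But both sides are linear in $\varphi$ and controlled by $\|\varphi\|_{L^2(\Omg)}$; by Cauchy--Schwarz $|(\varphi,f_\lm)_{L^2(\Omg)}|\le\|\varphi\|_{L^2(\Omg)}\,\|f_\lm\|_{L^2(\supp\varphi)}$, and $\|f_\lm\|_{L^2(\Omg_\eps)}\to0$ as the collar $\Omg_\eps$ shrinks, so under any normalization for which the left side tends to zero the right side tends to zero as well. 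Making your test work would require a rate comparison, which amounts to knowing that $g_\lm$ vanishes at the boundary faster than $f_\lm$ --- precisely the information you cannot establish for a general Radon measure. The paper closes this case algebraically, with no limiting argument: substituting $u=(\OpD-\lm)\phi$ with $\phi\in\dom\OpD$ into the assumed identity and using $(u,f_\lm)_{L^2(\Omg)}=\lm(\phi,\one)_{L^2(\Omg)}+((\OpD-\lm)\phi,\one)_{L^2(\Omg)}=(\OpD\phi,\one)_{L^2(\Omg)}$ yields $\int_\Omg\phi\,\dd\mu=c\,(\OpD\phi,\one)_{L^2(\Omg)}$ for all $\phi\in\dom\OpD$; for $\phi\in C_0^\infty(\Omg)$ the right-hand side equals $c\int_\Omg(-\nabla\cdot a\nabla\phi)\,\dd x=0$ by the divergence theorem, so $\mu$ annihilates every test function, contradicting $\mu(\Omg)=1$. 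Replacing your limiting argument by this substitution would complete your proof.
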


\begin{proof}
Assume the opposite, \ie\ that $\Op$ is self-adjoint. Due to Proposition~\ref{prop:0_lm1} one has $-1\notin\s(\Op)$,
and then the operator $\sfR_\mu(-1)=(\Op+1)^{-1}$ is bounded and self-adjoint.
Due to the assumption on $\mu$, the map 
\[
	L^2(\Omg)\ni u\mapsto \big\langle\RD(-1)u\big\rangle_\mu\in\dC
\]is a bounded linear functional, and by the Riesz theorem one can find $g\in L^2(\Omg)$ such that
$\big\langle \RD(-1)u\big\rangle_\mu=( u,g)_{L^2(\Omg)}$ for all $u\in L^2(\Omg)$.
By Theorem~\ref{thm0} one has
\[
\sfR_\mu(-1) 
	=
	\RD(-1) 	+ \dfrac{1}{\sfm_\mu(-1)} (\cdot,g )_{L^2(\Omg)} f,
	\quad f:=	-\RD(-1)\one  + \one,
\]
and then
\[
\sfR_\mu(-1)^* 
	=
	\RD(-1) 	+ \dfrac{1}{\sfm_\mu(-1)} (\cdot,f)_{L^2(\Omg)}  g.
\]
Due to $\sfR_\mu(-1)=\sfR_\mu(-1)^*$ we have then $f=g$, i.e. $( u,g)_{L^2(\Omg)}=(u,f)_{L^2(\Omg)}$
for all $u\in L^2(\Omg)$, or, in greater detail,
\begin{align*}
\int_\Omg \RD(-1) u\, \dd \mu&=\big(u, -\RD(-1)\one  + \one\big)_{L^2(\Omg)}\\
&= \big(\OpD\RD(-1) u,\one\big)_{L^2(\Omega)}  \text{ for all } u\in L^2(\Omg).
\end{align*}
Denoting $\varphi:=\RD(-1) u$ we arrive at
\[
\int_\Omg \varphi\,\dd\mu= \int_\Omg 
(-\nabla\cdot a\nabla \varphi) \, \dd x\quad \text{ for all } \varphi\in\dom \OpD.
\]
In particular, for all $\varphi\in C^\infty_0(\Omega)$ one obtains, using the integration by parts on the right-hand side
of the last equality,
\[
\int_\Omg \varphi\,\dd\mu=\int_\Omg a\nabla \varphi\cdot \nabla \one\,\dd x=0,
\]
and it follows that $\mu$ is not a probability measure on $\Omg$.
\end{proof}


In the following proposition we show that the spectral data of $\sfA_\mu$ and $\Op$ coincide. As a result, some known spectral properties of $\sfA_\mu$ 
can be transferred to $\Op$.
\begin{prop}\label{prop:spectra}
	The eigenvalues and the respective eigenspaces of $\sfA_\mu$ and $\Op$ coincide.
	In particular, $\s(\Op)\subset\{\lm\in\dC\colon \Re\lm \ge 0\}$ and $\s(\Op)\cap\ii\dR = \{0\}$.
\end{prop}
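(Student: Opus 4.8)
The plan is to prove the sharper statement that a function $u$ is an eigenfunction of $\Op$ for an eigenvalue $\lm$ precisely when it is an eigenfunction of $\sfA_\mu$ for the same $\lm$; the coincidence of eigenvalues and of the corresponding eigenspaces is then immediate, and the displayed spectral inclusions will follow by combining this with Theorem~\ref{thm0} and the facts $\s(\sfA_\mu)\subset\{\lm\in\dC\colon\Re\lm\ge0\}$ and $\s(\sfA_\mu)\cap\ii\dR=\{0\}$ established in~\cite{AKK16} and recalled in the introduction. The key observation is that $\Op$ and $\sfA_\mu$ act by the same differential expression and impose the same non-local boundary condition $u|_{\p\Omg}=\langle u\rangle_\mu$, only on a priori different regularity classes, so the whole argument reduces to an elliptic-regularity comparison.

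First I would treat the passage from $\sfA_\mu$ to $\Op$. Let $u\in\dom\sfA_\mu$ with $\sfA_\mu u=\lm u$; then $u\in C(\ov\Omg)\subset L^\infty(\Omg)\subset L^2(\Omg)$, and putting $c:=\langle u\rangle_\mu$ one has $-\nabla\cdot a\nabla u=\lm u$ in $\Omg$ with $u|_{\p\Omg}=c$. Since the lowest Dirichlet eigenvalue $\lm_1$ is positive, $0\in\rho(\OpD)$, so $w:=\OpD^{-1}(\lm u)\in\dom\OpD$ is well defined and $w|_{\p\Omg}=0$; iterating the global $L^p$-regularity estimates for the Dirichlet problem on the $C^2$-domain $\Omg$ (after expanding the divergence form the second-order coefficients $a_{ij}\in C^1(\ov\Omg)$ are continuous and the lower-order ones are bounded) together with Sobolev embeddings yields $w\in W^{2,p}(\Omg)$ for every finite $p$, hence $w\in C^1(\ov\Omg)\cap W(\Omg)$. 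The function $\phi:=u-c-w$ then lies in $W(\Omg)\cap C(\ov\Omg)$, solves $-\nabla\cdot a\nabla\phi=0$ in $\Omg$ and vanishes on $\p\Omg$, so the maximum principle for strong solutions of uniformly elliptic equations (this is where~\eqref{axi} enters) forces $\phi\equiv0$, that is, $u=c+w\in H^2(\Omg)$; moreover $\langle u\rangle_\mu=c$ then gives $\langle w\rangle_\mu=0$, so $u\in\dom\Op$ by~\eqref{eq:Op_dom} and $\Op u=-\nabla\cdot a\nabla u=\lm u$. For the reverse passage, given $u\in\dom\Op$ with $\Op u=\lm u$ I would write $u=u_0+c$ with $u_0\in\dom\OpD$ and $c=\langle u\rangle_\mu=u|_{\p\Omg}$, note that $-\nabla\cdot a\nabla u_0=\lm u\in H^2(\Omg)\subset L^2(\Omg)$, and run the very same bootstrap to get $u_0\in W^{2,p}(\Omg)$ for all finite $p$; then $u\in C^1(\ov\Omg)\cap W(\Omg)$ and $\nabla\cdot a\nabla u=-\lm u\in L^\infty(\Omg)$, and since $u$ is continuous up to $\p\Omg$ with trace $\langle u\rangle_\mu$, this shows $u\in\dom\sfA_\mu$ and $\sfA_\mu u=\lm u$.

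To finish, I would invoke Theorem~\ref{thm0}\,(i) to the effect that $\s(\Op)$ is purely discrete, hence equals the set of eigenvalues of $\Op$, which by the two steps above equals the set of eigenvalues of $\sfA_\mu$ and is therefore contained in $\s(\sfA_\mu)\subset\{\Re\lm\ge0\}$ and meets $\ii\dR$ only in $\{0\}$; since $0\in\s(\Op)$ because $\ker\Op=\spn\{\one\}$ by Theorem~\ref{thm0}\,(ii), this gives $\s(\Op)\subset\{\Re\lm\ge0\}$ and $\s(\Op)\cap\ii\dR=\{0\}$. The main obstacle I anticipate is exactly the regularity matching of the two domains: one must check that an $H^2(\Omg)$-eigenfunction of $\Op$ is automatically continuous up to $\p\Omg$ with $\nabla\cdot a\nabla u\in L^\infty(\Omg)$, and conversely that an eigenfunction of $\sfA_\mu$, which is a priori only in $C(\ov\Omg)\cap W(\Omg)$, actually belongs to $H^2(\Omg)$. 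Because $a$ is merely $C^1$ one cannot bootstrap beyond $W^{2,p}$-regularity, but $W^{2,p}$ for every finite $p$ --- and hence $C^{1,\alpha}(\ov\Omg)$ via Morrey's embedding --- is precisely what the argument needs; the one genuinely delicate point is the identification $u=c+w$ in the first step, which relies on the maximum principle and therefore on uniform ellipticity~\eqref{axi}.
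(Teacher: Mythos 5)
Your proposal is correct and follows essentially the same route as the paper: both directions are handled by comparing the two domains through elliptic $W^{2,p}$-regularity for the Dirichlet problem (Lemma~\ref{lem:regularity}) plus a Sobolev bootstrap for the passage from $\dom\Op$ to $\dom\sfA_\mu$, and the spectral inclusions are then imported from~\cite{AKK16} via the discreteness of $\s(\Op)$ from Theorem~\ref{thm0}. The only (welcome) difference is that you make explicit, via the maximum principle for strong solutions, the identification of the $C(\ov\Omg)\cap W(\Omg)$ eigenfunction with the unique $H^2\cap H^1_0$ Dirichlet solution, a uniqueness step the paper leaves implicit in its appeal to Lemma~\ref{lem:regularity}.
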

The proof relies on the well-known regularity lemma: 
\begin{lem}\label{lem:regularity}\cite[Thm. 2.4.2.5]{G85}
	For any $p\ge 1$ and any $f\in L^p(\Omega)$ there is a unique $w\in W^{2,p}(\Omega)\cap W^{1,p}_0(\Omega)$ 
	with $\nabla \cdot a\,\nabla w=f$.
	%
\end{lem}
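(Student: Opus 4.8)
The plan is to establish the coincidence of eigenpairs in both directions and then to transfer the spectral inclusions from the known properties of $\sfA_\mu$ recalled in the introduction, using that \emph{both} operators have spectra consisting solely of eigenvalues (Theorem~\ref{thm0}\,(i) guarantees that $\s(\Op)$ is purely discrete, while $\sfA_\mu$ has only point spectrum by~\cite{AKK16}).

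First I would show that every eigenfunction of $\Op$ is an eigenfunction of $\sfA_\mu$ for the same eigenvalue. Let $u\in\dom\Op\sm\{0\}$ satisfy $\Op u=\lm u$. Using the decomposition~\eqref{eq:Op_dom}, write $u=u_0+c$ with $u_0\in\dom\OpD$ and $c\in\dC$, so that $\nabla\cdot a\,\nabla u_0=-\lm(u_0+c)$. Starting from $u_0\in L^2(\Omg)$ I would bootstrap regularity via Lemma~\ref{lem:regularity}: if $u_0\in L^p(\Omg)$ then the right-hand side $-\lm(u_0+c)$ lies in $L^p(\Omg)$, whence $u_0\in W^{2,p}(\Omg)$, and the Sobolev embedding raises the integrability exponent. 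After finitely many steps the exponent exceeds $d/2$, so $u_0\in C(\ov\Omg)\cap\bigcap_{p>1}W^{2,p}(\Omg)$; in particular $u\in C(\ov\Omg)\cap W(\Omg)$, $u\in L^\infty(\Omg)$, and $\nabla\cdot a\,\nabla u=-\lm u\in L^\infty(\Omg)$. As $u$ already satisfies the non-local boundary condition, we get $u\in\dom\sfA_\mu$ and $\sfA_\mu u=\lm u$.

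For the converse, let $u\in\dom\sfA_\mu\sm\{0\}$ with $\sfA_\mu u=\lm u$; the only property missing for $u\in\dom\Op$ is the global regularity $u\in H^2(\Omg)$. Put $c:=u|_{\p\Omg}=\langle u\rangle_\mu\in\dC$. Since $u\in C(\ov\Omg)\subset L^\infty(\Omg)$, the inhomogeneity $-\lm u$ belongs to $L^p(\Omg)$ for every $p$, so Lemma~\ref{lem:regularity} provides a unique $v_0\in\bigcap_{p>1}\big(W^{2,p}(\Omg)\cap W^{1,p}_0(\Omg)\big)$ with $\nabla\cdot a\,\nabla v_0=-\lm u$; by Sobolev embedding $v_0\in C(\ov\Omg)$ with $v_0|_{\p\Omg}=0$. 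The remainder $g:=u-c-v_0$ then lies in $C(\ov\Omg)\cap W(\Omg)$, vanishes on $\p\Omg$, and solves $\nabla\cdot a\,\nabla g=0$ a.e. Applying the maximum principle for strong solutions, valid since $g\in W^{2,d}_{\rm loc}(\Omg)\cap C(\ov\Omg)$ (see, \eg, \cite[Thm.~9.1]{GT}), separately to $\Re g$ and $\Im g$---which is legitimate because $a$ is real-valued---I would conclude $g\equiv 0$, hence $u=c+v_0\in H^2(\Omg)$. Together with the already valid boundary condition this yields $u\in\dom\Op$ and $\Op u=\lm u$.

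Combining the two directions shows that $\sfA_\mu$ and $\Op$ share the same eigenvalues with identical eigenspaces. Since both spectra consist only of eigenvalues, $\s(\Op)=\s(\sfA_\mu)$, and the inclusions $\s(\Op)\subset\{\lm\in\dC\colon\Re\lm\ge 0\}$ and $\s(\Op)\cap\ii\dR=\{0\}$ follow from the corresponding facts for $\sfA_\mu$ quoted in the introduction. The main obstacle is the converse direction: promoting a bounded $\sfA_\mu$-eigenfunction, which a priori enjoys only interior $W^{2,p}_{\rm loc}$-regularity and continuity up to the boundary, to a genuine $H^2(\Omg)$-function. This is resolved by comparing $u$ with the Dirichlet solution $v_0$ carrying the same inhomogeneity and invoking uniqueness through the maximum principle, which forces the continuous, boundary-vanishing, $a$-harmonic difference $g$ to vanish identically.
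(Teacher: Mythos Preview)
Your proposal does not address the stated lemma at all. Lemma~\ref{lem:regularity} is a standard $W^{2,p}$-regularity result for the Dirichlet problem, quoted verbatim from Grisvard~\cite[Thm.~2.4.2.5]{G85}; the paper supplies no proof, and none is expected---it is a black-box citation. What you have actually written is a proof of Proposition~\ref{prop:spectra}, which \emph{uses} Lemma~\ref{lem:regularity} as a tool.

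Read as a proof of Proposition~\ref{prop:spectra}, your argument is correct and follows the same two-step outline as the paper: bootstrap regularity via Lemma~\ref{lem:regularity} in one direction, and promote a bounded $\sfA_\mu$-eigenfunction to $H^2(\Omega)$ in the other. The difference lies in the direction $\sfA_\mu\Rightarrow\Op$. The paper simply writes ``by Lemma~\ref{lem:regularity} with $p=2$ we obtain $u_0\in H^2(\Omega)$,'' tacitly identifying the given $u_0$ (which a~priori lies only in $C(\ov\Omega)\cap W(\Omega)$) with the unique $W^{2,2}\cap W^{1,2}_0$-solution furnished by the lemma. You make this identification explicit: you construct the Dirichlet solution $v_0$ and use the strong maximum principle from~\cite{GT} to force the continuous, boundary-vanishing, $a$-harmonic remainder $g=u-c-v_0$ to vanish. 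Your route is more careful about why the merely $W^{2,p}_{\rm loc}$-function agrees with the global solution, at the price of invoking an extra tool. Both arguments are valid; the paper's is terser, yours is more self-contained on the uniqueness step.
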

\begin{proof}[Proof of Proposition~\ref{prop:spectra}]
	\emph{Step 1.} 
	Let $\lm\in\dC$ and $u\in\dom\sfA_\mu$ be such that $\sfA_\mu u = \lm u$.
	Our aim is to show that $u\in\dom\Op$ and that $\Op u = \lm u$.
	The function $u$ can be decomposed as $u = u_0 + c$ with $c\in\dC$ and
	with $u_0\in C(\ov\Omega)$
	satisfying $\nabla\cdot a\nabla u_0\in L^\infty(\Omega)$ and $u_0|_{\p\Omega} =
	\int_\Omg u_0(x)\dd\mu(x) =  0$.
	The inclusions $C(\ov\Omega)\subset L^\infty(\Omega)\subset L^2(\Omega)$ yield
	$u_0, \nabla\cdot a\nabla u_0  \in L^2(\Omega)$. Hence, by Lemma~\ref{lem:regularity} with $p = 2$ we obtain that
	$u_0 \in H^2(\Omega)$ and hence also $u = u_0 + c\in H^2(\Omega)$. Therefore, $u\in\dom\Op$
	and
	$\Op u = \lm u$ easily follows.  
	
	\emph{Step 2.} 
	Let $\lm\in\dC$ and $u\in\dom\Op$ be such that $\Op u = \lm u$.
	Our aim is to show the converse implication, namely, that $u\in\dom\sfA_\mu$ and that $\sfA_\mu u = \lm u$.
	Clearly, the eigenfunction $u$ can be decomposed as $u = u_0 + c$ with $c\in\dC$ with 
	$u_0 \in H^2(\Omega)$ satisfying $u_0|_{\p\Omega} = 0$.
	Recall  the identity 
	\begin{equation}\label{eq:relation}
		\OpD u_0 = \lm u_0 + \lm c.
	\end{equation}
	If $d \le 4$, the Sobolev embedding theorem~\cite[Eq. 1.4.4.5]{G85} yields $u_0\in L^p(\Omega)$ for all $p\ge 1$.
	Hence, we conclude from~\eqref{eq:relation} and Lemma~\ref{lem:regularity} that $u_0 \in W^{2,p}(\Omega)$ for all $p \ge 1$.
	If $d > 4$ we use standard bootstrap argument based on Lemma~\ref{lem:regularity} applied several times in order to show that again $u_0 \in W^{2,p}(\Omega)$ for all $p \ge 1$. 
	Since by the Sobolev embedding~\cite[Eq. 1.4.4.6]{G85} we have $W^{2,p}(\Omega)\subset C^{0,2-\frac{d}{p}}(\ov\Omega)$ for 
	$1 < \frac{d}{p} < 2$. Hence, we get with $p := \frac{d^2}{d+1}$ that $u_0\in C^{0,1-\frac{1}{d}}(\ov\Omega)\subset C(\ov\Omega)$. Therefore, $u = u_0+c\in C(\ov\Omega)$ and thanks to the identity~\eqref{eq:relation} we have $\nabla \cdot a\,\nabla  u = \nabla \cdot a\,\nabla u_0 \in L^\infty(\Omega)$.
	Thus, $u\in\dom\sfA_\mu$
	and
	$\sfA_\mu u = \lm u$ easily follow. 
	
	\emph{Step 3.} Equality $\s(\sfA_\mu) = \s(\Op)$ combined with~\cite[proof of Thm. 4.8]{AKK16} and~\cite[proof of Thm. 1.3 (d)]{AKK16}
	imply $\s(\Op)\subset\{\lm\in\dC\colon \Re\lm \ge 0\}$ and $\s(\Op)\cap\ii\dR = \{0\}$, respectively.
\end{proof}

\section{Numerical range} \label{sec:numrange}
The goal of this section is to compute the numerical range of $\Op$.
Recall that the numerical range of a linear operator $\sfT$ in a Hilbert space $\cH$
 is the following subset of $\dC$:
\[
\left\{(\sfT\psi,\psi)_{\cH}\colon \psi \in\dom \sfT, \|\psi\|_{\cH} = 1\right\},
\]
which is known to be a convex set.

\begin{thm}\label{thm:numerical_range}
The numerical range of $\Op$ is $\dC$.
\end{thm}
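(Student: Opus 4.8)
The strategy is to exploit the non-locality of the boundary condition, which renders the trial class of $\Op$ very large. By the description~\eqref{eq:Op_dom} of $\dom\Op$, together with $\one\in\dom\Op$ and $\Op\one=0$, every function $u=t\,v+c\,\one$ with $t>0$, $c\in\dC$ and $v\in\dom\OpD$ satisfying $\langle v\rangle_\mu=0$ belongs to $\dom\Op$. For such $u$, using Green's formula (recall $v|_{\p\Omg}=0$) and the divergence theorem,
\[
	(\Op u,u)_{L^2(\Omg)}=t^2\,\|a^{1/2}\nabla v\|^2_{L^2(\Omg)}+t\,\ov c\,\beta(v),\qquad
	\beta(v):=(\OpD v,\one)_{L^2(\Omg)}=-\int_{\p\Omg}a\nabla v\cdot\nu\,\dd\s,
\]
where $\nu$ is the outward unit normal. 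The decisive feature is that the quadratic part is real and non-negative, whereas the \emph{linear} coefficient $\beta(v)$ can be kept equal to a fixed non-zero real number even when $v$ is arbitrarily small in $H^1(\Omg)$, because $\beta(v)$ sees only the first-order jet of $v$ on $\p\Omg$. Given such a $v$, one then picks $t$ and $c$ so that $(\Op u,u)_{L^2(\Omg)}$ equals a prescribed $z\in\dC$ while $\|u\|_{L^2(\Omg)}=1$.

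To construct $v$, I would use a boundary layer. Since $\Omg$ is a $C^2$-domain, $d(x):=\dist(x,\p\Omg)$ is $C^2$ on a one-sided neighbourhood $\{0<d<\dl_0\}$ of $\p\Omg$, with $|\nabla d|=1$ and $\nabla d|_{\p\Omg}=-\nu$. Fix a cut-off $\rho\in C^\infty([0,\infty))$ with $\rho\equiv1$ on $[0,1/4]$, $0\le\rho\le1$, $\supp\rho\subset[0,3/4]$, and set $\phi_\dl:=d\,\rho(d/\dl)$. For $\dl\in(0,\dl_0)$ one has $\phi_\dl\in H^2(\Omg)\cap H^1_0(\Omg)=\dom\OpD$, and a direct computation (passing to coordinates on the collar $\{d<\dl_0\}$) gives $\|\phi_\dl\|^2_{L^2(\Omg)}=O(\dl^3)$, $\|\nabla\phi_\dl\|^2_{L^2(\Omg)}\asymp\dl$, $\|\phi_\dl\|_{L^\infty(\Omg)}\le\dl$, while $\nabla\phi_\dl|_{\p\Omg}=-\nu$ and hence $\beta(\phi_\dl)=\int_{\p\Omg}(\nu\cdot a\,\nu)\,\dd\s\ge c\,|\p\Omg|>0$ by~\eqref{axi} (which extends to $\ov\Omg$ by continuity of $a$); note that this last quantity does \emph{not} depend on $\dl$. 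To impose $\langle\cdot\rangle_\mu=0$, fix $\psi_0\in C^\infty_0(\Omg)$ with $\langle\psi_0\rangle_\mu\ne0$ (such a $\psi_0$ exists since $\mu(\Omg)=1$) and put $v_\dl:=\phi_\dl-\big(\langle\phi_\dl\rangle_\mu/\langle\psi_0\rangle_\mu\big)\psi_0$. Because $|\langle\phi_\dl\rangle_\mu|\le\|\phi_\dl\|_{L^\infty(\Omg)}\le\dl$ and $\psi_0$ contributes nothing to $\beta$, we obtain $v_\dl\in\dom\OpD$, $\langle v_\dl\rangle_\mu=0$, $\beta(v_\dl)=\beta(\phi_\dl)=:\beta_0>0$, $\|v_\dl\|^2_{L^2(\Omg)}=O(\dl^2)$, $Q_\dl:=\|a^{1/2}\nabla v_\dl\|^2_{L^2(\Omg)}\asymp\dl$ (using ellipticity), and $\gamma_\dl:=(v_\dl,\one)_{L^2(\Omg)}=O(\dl)$ as $\dl\to0$.

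Now fix $z\in\dC\setminus\{0\}$ (the value $0$ is attained at $\one/\sqrt{|\Omg|}$). For $t>0$ set $u=t\,v_\dl+c\,\one$ and choose $c=c(t)\in\dC$ so that $t\,\ov c\,\beta_0=z-t^2Q_\dl$; then $(\Op u,u)_{L^2(\Omg)}=z$ and $|c(t)|=|z-t^2Q_\dl|/(t\beta_0)$. Consider
\[
	N_\dl(t):=\|u\|^2_{L^2(\Omg)}=t^2\|v_\dl\|^2_{L^2(\Omg)}+2t\,\gamma_\dl\,\Re c(t)+|c(t)|^2\,|\Omg|,
\]
which is continuous on $(0,\infty)$; as $t\to0^+$ the term $|c(t)|^2|\Omg|\sim|z|^2|\Omg|/(t^2\beta_0^2)$ forces $N_\dl(t)\to+\infty$ (the two remaining terms stay bounded, since $|2t\gamma_\dl\Re c(t)|\le 2|\gamma_\dl|\,|z-t^2Q_\dl|/\beta_0$), so $N_\dl(t_1)>1$ for some $t_1\in(0,t_\dl)$, where $t_\dl:=\sqrt{|z|/Q_\dl}$. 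At $t=t_\dl$ one has $t_\dl^2Q_\dl=|z|$, hence $|c(t_\dl)|\le 2|z|/(t_\dl\beta_0)$, and therefore $|c(t_\dl)|^2|\Omg|\le 4|z||\Omg|Q_\dl/\beta_0^2$, $t_\dl^2\|v_\dl\|^2_{L^2(\Omg)}=|z|\,\|v_\dl\|^2_{L^2(\Omg)}/Q_\dl\lesssim|z|\dl$, and $|2t_\dl\gamma_\dl\Re c(t_\dl)|\le 4|z||\gamma_\dl|/\beta_0$ all tend to $0$ as $\dl\to0$; thus $N_\dl(t_\dl)\to0$, and $N_\dl(t_\dl)<1$ once $\dl$ is small enough. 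By the intermediate value theorem there is then $t_*\in(t_1,t_\dl)$ with $N_\dl(t_*)=1$, and $u_*:=t_*v_\dl+c(t_*)\,\one\in\dom\Op$ is a unit vector with $(\Op u_*,u_*)_{L^2(\Omg)}=z$. As $z\in\dC$ was arbitrary, the numerical range of $\Op$ equals $\dC$.

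The only genuinely delicate point is the boundary-layer estimate of the second paragraph --- keeping $\beta(v_\dl)=\beta_0$ bounded away from $0$ while $\|v_\dl\|_{L^2(\Omg)}$ and $\|\nabla v_\dl\|_{L^2(\Omg)}$ both vanish --- which is precisely what the profile $d\,\rho(d/\dl)$ delivers, together with the lower bound $Q_\dl\gtrsim\dl$ needed to control $\|v_\dl\|^2_{L^2(\Omg)}/Q_\dl$. Everything else is the elementary algebra above, the decomposition~\eqref{eq:Op_dom}, and one application of the intermediate value theorem, requiring no elliptic regularity beyond the trace/Green identities and the $C^2$-smoothness of the distance function near $\p\Omg$.
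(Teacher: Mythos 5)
Your proposal is correct and rests on the same mechanism as the paper's proof: a boundary layer in $\dom\Op$ whose normal derivative on $\p\Omg$ produces the positive flux $\int_{\p\Omg}\nu\cdot a\,\nu\,\dd\s$, which enters the quadratic form only through the cross term with the constant part of the trial function permitted by the non-local boundary condition, so that this term can be steered to any complex value. The differences are cosmetic: the paper scales the layer so that the Rayleigh quotient blows up like $\eps^{-1/2}$ in the four directions $\pm1,\pm\ii$ and then invokes convexity of the numerical range, whereas you scale so that the gradient energy vanishes and solve explicitly for the coefficients (via the intermediate value theorem) to hit each $z\in\dC$ exactly.
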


\begin{proof}
Denote by $n$ the inner unit normal on $\partial\Omega$.
For $\eps>0$ denote 
\[
\Omega_\eps=\{x\in\Omega:\, \dist(x,\partial\Omega)<\eps\}.
\]
Recall the well-known fact  (see, \eg, \cite[Sec. 14.6]{GT}):
for sufficiently small $\eps>0$ the distance function
$\rho:\,\overline{\Omega_\eps}\ni x\mapsto \text{dist}\,(x,\partial \Omega)$
is $C^2$-smooth with $|\nabla \rho| = 1$ and $\nabla \rho (x)=n(x)$ for all $x\in\partial\Omega$.

Choose $f\in C^2\big([0,1],\dC\big)$ such that
\[
f(0)= f(1)= f'(1)=0, \quad f'(0)\ne 0,
\]
and define $\varphi_\eps:\Omega\to\dC$ by
\begin{equation*}
 \varphi_\eps(x)=\begin{cases}
             0, & x\in\Omega\setminus\Omega_\eps, \\
             \sqrt{\eps}f \big(\eps^{-1}\rho(x)\big), & x\in\Omega_\eps.
            \end{cases}
\end{equation*}
Denote $b_\eps:=\langle \varphi_\eps\rangle_\mu$.
As $\varphi_\eps$ are uniformly bounded with $\lim_{\eps\to 0}\varphi_\eps(x)=0$ for each $x\in\Omega$,
it follows by the dominated convergence that 
\begin{equation}\label{numbered}
\lim_{\eps\to 0} b_\eps=0.
\end{equation}
Now we pick any $\theta\in C^\infty_0(\Omega)$ with $\langle\theta\rangle_\mu=1$
and denote
\[
\psi_\eps:=\one + \varphi_\eps - b_\eps\theta.
\]
By construction one has $\psi_\eps\in H^2(\Omega)$ and $\langle \psi_\eps\rangle_\mu=1 = \psi_\eps(x)$ for all $x\in\p\Omg$, \ie $\psi_\eps\in\dom\Op$.

Now we will study the asymptotic behaviour of the Rayleigh quotient for $\psi_\eps$ as $\eps$ tends to $0$.
Using the triangle inequality we estimate
\[
	\Big|\|\psi_\eps\|_{L^2(\Omega)}-\|\one\|_{L^2(\Omega)}\Big| 
		\le \|\varphi_\eps - b_\eps \theta\|_{L^2(\Omega)}\le \|\varphi_\eps\|_{L^2(\Omega)}+|b_\eps|\, \|\theta\|_{L^2(\Omega)}.
\]
With the help of~\eqref{numbered} and
\[
\|\varphi_\eps\|^2_{L^2(\Omega)}
\le \|\varphi_\eps\|_\infty^2|\Omg| =
O(\eps)=o(1)
\]
as $\eps \to 0$,
we get
\begin{equation}
  \label{norm4}
\|\psi_\eps\|^2_{L^2(\Omega)}=|\Omega|+o(1).
\end{equation}
Now we use the integration by parts to obtain
\begin{equation}
   \label{green1}
	\begin{aligned}
(\Op\psi_\eps,\psi_\eps)_{L^2(\Omega)} &=\int_{\Omega}(-\nabla\cdot a\nabla \psi_\eps)\overline{\psi_\eps}\dd x\\
&=\int_{\partial\Omega}(n\cdot a\nabla \psi_\eps)\overline{\psi_\eps}\dd \sigma
+\int_{\Omega} a\nabla\psi_\eps\cdot \nabla\overline{\psi_\eps}\dd x. 
\end{aligned}
\end{equation}
For $x\in \partial\Omega$ one has
\[
\nabla \psi_\eps (x)=\nabla \varphi_\eps (x)=\eps^{-1/2}f'(0) \nabla \rho(x)=\eps^{-1/2}f'(0)n(x),
\quad \psi_\eps(x)=1,
\]
hence,
\begin{equation}
   \label{green2}
\int_{\p\Omg}(n\cdot a\nabla \psi_\eps)\overline{\psi_\eps}\dd\sigma
=\beta f'(0) \eps^{-1/2}
\quad \text{ with }
\beta:=\int_{\partial\Omega} n\cdot a n\dd\sigma>0
\end{equation}
(the strict positivity of $\beta$ follows from~\eqref{axi}).
In addition, for a suitable $C>0$ one estimates
$|a\xi\cdot \overline{\xi} |\le C|\xi|^2$ for all $\xi\in\dC^d$,
which gives, as $\eps>0$ is sufficiently small (such that $\supp\theta \cap \Omega_\eps=\emptyset$),
\begin{align*}
\Big|\int_{\Omega} a\nabla\psi_\eps\cdot \nabla\ov{\psi_\eps}\dd x\Big|&\le  C \int_\Omega |\nabla\psi_\eps|^2\dd x\\
&\le C \Big(|b_\eps|^2\int_{\Omega\setminus\Omega_\eps} |\nabla\theta|^{2}\dd x + \int_{\Omega_\eps} |\nabla \varphi_\eps|^2\dd x\Big).
\end{align*}
For $x\in\Omega_\eps$ one estimates
\[
\big|\nabla\varphi_\eps(x)\big|
=\Big|\eps^{-1/2} f'\big(\eps^{-1}\rho(x)\big)\nabla\rho(x)\Big|\le c_1 \eps^{-1/2}, \quad c_1:=\|f'\|_\infty,
\]
and due to $|\Omega_\eps|=O(\eps)$ one obtains $\displaystyle\int_{\Omega_\eps} |\nabla \varphi_\eps|^2\dd x=O(1)$.
Together with $b_\eps=o(1)$ this gives
\begin{equation}
  \label{green3}
\int_{\Omega} a\nabla\psi_\eps\cdot \nabla\overline{\psi_\eps}
=O(1)
\end{equation}
as $\eps \to 0$.

Using \eqref{green2} and \eqref{green3} in \eqref{green1} gives $(\Op\psi_\eps,\psi_\eps)_{L^2(\Omega)}=\beta f'(0) \eps^{-1/2}+O(1)$. By combining
with \eqref{norm4} we arrive at
\begin{align*}
\dfrac{(\Op\psi_\eps,\psi_\eps)_{L^2(\Omega)}}{\|\psi_\eps\|^2_{L^2(\Omega)}}&=\dfrac{\beta f'(0) \eps^{-1/2}+O(1)}{|\Omg|+o(1)}\\
&=\dfrac{\beta}{|\Omg|} f'(0) \eps^{-1/2}+O(1)
\quad\mbox{as }\eps\to0, 
\quad \beta>0.
\end{align*}
By taking $f$ with $f'(0)\in\{1,-1,\ii,-\ii\}$ we conclude that there exists $r_0>0$ such that for any $R_0>0$
one can find $R>R_0$ and $r\in(-r_0,r_0)$ such that the numerical range contains the points $\pm R+ \ii r$ and $\pm \ii R+r$.
As the convex hull of these points is also contained in the numerical range, we conclude that the numerical range covers
the whole complex plane.
\end{proof}

\begin{remark}
As a consequence of Theorem~\ref{thm:numerical_range}, 
the operator $\Op$ is neither m-sectorial nor m-accretive. 
In particular, a similar argument disproves 
the wrong statement of \cite{KK16} that~$\Op$ is m-accretive 
in the case of deterministic measures in one dimension;
see~\cite{KK16c} for a correction.
\end{remark}
\section{Measures with $L^2$-densities}\label{sec:L2}
In this section we consider a special class of absolutely continuous measures on $\Omega$
with square-integrable densities.
For this class of measures the analysis of $\Op$ simplifies. In particular, we are able to compute
the adjoint operator $\Op^*$ and analyse its basic properties. 
\begin{hyp}\label{hyp:measure}
The measure $\mu $ is absolutely continuous with an $L^2$-density, i.e. $\dd\mu(x)=w(x)\dd x$ 
with $w\in L^2(\Omega)$ satisfying $w\geq 0$ and $(\one,w)_{L^2(\Omega)}=1$.
\end{hyp}

Note that under this hypothesis we may write 
$$
  \sfm_\mu(\lambda)= (\RD(\lambda)\one,w)_{L^2(\Omg)}
  \quad \mbox{and} \quad
  \sfm_\mu(\ov\lambda)=(\one,\RD(\lambda) w)_{L^2(\Omg)}
  .
$$

Firstly, we will need the following observation.
\begin{lem} \label{lem:adjoint}
Let $B$ be a bounded everywhere defined linear operator on a Hilbert space $(\cH, (\cdot,\cdot)_\cH)$ and $f,g\in\cH$. Then 
\[
	\big((B\cdot, f)_\cH g\big)^*=( \cdot,g)_\cH B^* f.
\]	
\end{lem}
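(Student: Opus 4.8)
The plan is to verify the operator identity $\big((B\cdot, f)_\cH\, g\big)^* = (\cdot, g)_\cH\, B^* f$ directly from the definition of the adjoint, i.e.\ by checking that for every $u, v \in \cH$ one has
\[
	\big((B u, f)_\cH\, g,\, v\big)_\cH
	= \big(u,\, (v, g)_\cH\, B^* f\big)_\cH .
\]
Both sides are bounded sesquilinear forms in $(u,v)$ since $B$ is bounded and everywhere defined (so $B^*$ is bounded and everywhere defined as well), which guarantees that the rank-one operator $(B\cdot, f)_\cH\, g$ is itself bounded and everywhere defined and hence has a genuine bounded adjoint; this is the only place where the hypotheses on $B$ enter.

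The computation itself is a one-line unwinding of the inner-product axioms. Starting from the left, I would pull the scalar $(Bu, f)_\cH$ out of the inner product: $\big((Bu, f)_\cH\, g, v\big)_\cH = (Bu, f)_\cH\, (g, v)_\cH$. Then I would move $B$ to the other factor via the adjoint, $(Bu, f)_\cH = (u, B^* f)_\cH$, and rewrite $(g, v)_\cH = \overline{(v, g)_\cH}$. Collecting, the left-hand side equals $(u, B^* f)_\cH\, \overline{(v, g)_\cH}$, and by (anti)linearity of the inner product in the second slot this is exactly $\big(u,\, (v, g)_\cH\, B^* f\big)_\cH$, which is the right-hand side. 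Since $u, v$ were arbitrary and the right-hand side is manifestly of the form $(u, Tv)_\cH$ with $T$ bounded and everywhere defined, the uniqueness of the adjoint yields $T = \big((B\cdot, f)_\cH\, g\big)^*$, i.e.\ $\big((B\cdot, f)_\cH\, g\big)^* = (\cdot, g)_\cH\, B^* f$.

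There is essentially no obstacle here: the lemma is a bookkeeping fact about rank-one operators composed with a bounded operator, and the only subtlety is keeping track of which argument the inner product is linear/antilinear in (the paper's convention, judging from earlier formulas such as $(\one,\chi_n)_{L^2(\Omg)}$ and the Riesz-representation step in Proposition~\ref{prop}, is linearity in the first argument). If one wanted to be slightly slicker, one could instead factor $(B\cdot, f)_\cH\, g = g\,(f, \cdot)_\cH^{*}\,B$ — writing the functional $u \mapsto (Bu,f)_\cH$ as $(B^*f, \cdot)$ composed appropriately — and use that $(ST)^* = T^* S^*$ for bounded operators together with the fact that the adjoint of $u \mapsto (u,h)_\cH\, k$ is $u \mapsto (u,k)_\cH\, h$; but the direct verification above is shorter and self-contained, so that is the route I would take.
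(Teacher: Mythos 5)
Your proof is correct and follows essentially the same route as the paper: both verify the adjoint identity by testing against arbitrary $u,v\in\cH$, pulling out the scalar $(Bu,f)_\cH$, passing $B$ to the other slot via $B^*$, and using conjugate symmetry together with antilinearity in the second argument. The only addition in your write-up is the (harmless and correct) explicit remark that the rank-one operator is bounded and everywhere defined so its adjoint exists, which the paper leaves implicit.
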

\begin{proof}
For any $\psi,\varphi \in \cH$ one gets	
\[
	\big( ( B\varphi,f)_\cH g, \psi\big)_\cH= ( B\varphi,f)_\cH(g,\psi)_\cH= \ov{(\psi,g)_\cH}
	(\varphi, B^*f) = 
	(\varphi, (\psi,g)_\cH B^*f).\qedhere
\]
\end{proof}
With this result in hand we will compute the adjoint of $\Op$.
\begin{prop}
 Let the measure $\mu$ be as in Hypothesis~\ref{hyp:measure}. 
 Then 
 \begin{equation*}
  \Op^*=\OpD-( \OpD\cdot,\one)_{L^2(\Omg)} w,\qquad \dom \Op^* = \dom\OpD.
 \end{equation*}
 Moreover, for all $\lm\in\cS_\mu$, 
 \begin{equation} \label{eq:adjointResForm}
  (\Op^*-\lambda)^{-1}=\RD(\lambda)-
   \big(\cdot, \ov\lambda\RD(\ov\lambda)\one+\one\big)_{L^2(\Omg)} \frac{\RD(\lambda)w}{\lambda \sfm_\mu(\lambda)}.
 \end{equation}
\end{prop}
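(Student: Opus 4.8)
The plan is to derive the formula for $\Op^*$ directly from the Krein-type resolvent identity of Theorem~\ref{thm0}\,(i), and then to obtain~\eqref{eq:adjointResForm} by taking adjoints in that same identity with the help of Lemma~\ref{lem:adjoint}. First I would fix some $\lm_0\in\cS_\mu$ (nonempty by Theorem~\ref{thm0}); under Hypothesis~\ref{hyp:measure} the functional $u\mapsto\langle\RD(\lm_0)u\rangle_\mu=(\RD(\lm_0)u,w)_{L^2(\Omg)}=(u,\RD(\ov\lm_0)w)_{L^2(\Omg)}$ is a genuine $L^2$ scalar product against the vector $g_0:=\RD(\ov\lm_0)w$, and the rank-one term in~\eqref{eq:resolvent} becomes $-\big(\cdot,g_0\big)_{L^2(\Omg)}\,\dfrac{\lm_0\RD(\lm_0)\one+\one}{\lm_0\sfm_\mu(\lm_0)}$. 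Abbreviating $f_0:=\lm_0\RD(\lm_0)\one+\one$, we have $\R(\lm_0)=\RD(\lm_0)-\dfrac{1}{\lm_0\sfm_\mu(\lm_0)}\big(\cdot,g_0\big)_{L^2(\Omg)}f_0$, so by Lemma~\ref{lem:adjoint} (with $B=\RD(\lm_0)$, noting $B^*=\RD(\ov\lm_0)$ by self-adjointness of $\OpD$) one gets
\[
  \R(\lm_0)^*=\RD(\ov\lm_0)-\frac{1}{\ov{\lm_0}\,\ov{\sfm_\mu(\lm_0)}}\big(\cdot,f_0\big)_{L^2(\Omg)}\,\RD(\ov\lm_0)g_0 .
\]
Since $\R(\lm_0)^*=(\Op^*-\ov\lm_0)^{-1}$, this already identifies $\ov\lm_0$-resolvent of $\Op^*$; renaming $\ov\lm_0\rightsquigarrow\lm$ and using $\ov{\sfm_\mu(\lm_0)}=\sfm_\mu(\ov\lm_0)$ together with $\RD(\ov\lm_0)g_0=\RD(\lm)\RD(\ov\lm)w$ — here I would instead reorganise so that the vector appearing is $\RD(\lm)w$, which is the form in~\eqref{eq:adjointResForm}; the bookkeeping between $\RD(\lm)$, $\RD(\ov\lm)$ and the factor $\lm\sfm_\mu(\lm)$ versus $\ov\lm\sfm_\mu(\ov\lm)$ is the one genuinely fiddly point and must be done carefully. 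A cleaner route, which I would actually follow, is: compute $f_0$ and $g_0$ explicitly, observe $\RD(\ov\lm_0)g_0=\RD(\ov\lm_0)\RD(\lm_0)w$, and use the resolvent identity $\RD(\lm_0)-\RD(\ov\lm_0)=(\lm_0-\ov\lm_0)\RD(\lm_0)\RD(\ov\lm_0)$ only if needed; in the end one checks the claimed expression~\eqref{eq:adjointResForm} holds for all $\lm\in\cS_\mu$ by the symmetry $\lm\in\cS_\mu\iff\ov\lm\in\cS_\mu$ (immediate from $\ov{\sfm_\mu(\lm)}=\sfm_\mu(\ov\lm)$, which was recorded just before the statement).

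Next I would extract the differential description of $\Op^*$ from the resolvent. Having $(\Op^*-\lm)^{-1}$ as a bounded rank-one perturbation of $\RD(\lm)$ with everywhere-defined bounded inverse, $\lm\in\rho(\Op^*)$, and $\dom\Op^*=\ran(\Op^*-\lm)^{-1}$. Applying $(\Op^*-\lm)^{-1}$ to an arbitrary $v\in L^2(\Omg)$: writing $u:=(\Op^*-\lm)^{-1}v=\RD(\lm)v - c_v\,\RD(\lm)w$ with scalar $c_v:=\dfrac{(v,\,\ov\lm\RD(\ov\lm)\one+\one)_{L^2(\Omg)}}{\lm\sfm_\mu(\lm)}$, both $\RD(\lm)v$ and $\RD(\lm)w$ lie in $\dom\OpD=H^2(\Omg)\cap H^1_0(\Omg)$ (here $w\in L^2(\Omg)$ is used crucially), so $u\in\dom\OpD$; hence $\dom\Op^*\subseteq\dom\OpD$, and a dimension/range count gives equality. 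Then $\OpD u=(\lm u+v)-c_v(\lm\RD(\lm)w+w)$, and using $u|_{\p\Omg}=0$ one wants to show $\OpD u - v = -\,(\OpD u,\one)_{L^2(\Omg)}\,w$, i.e. that the coefficient $c_v\cdot(\text{something})$ equals $(\OpD u,\one)_{L^2(\Omg)}$. The key computation is $(\OpD\RD(\lm)v,\one)=(\,v+\lm\RD(\lm)v,\one)=(v,\one)+\lm(\RD(\lm)v,\one)$, and similarly with $w$; pairing the defining scalar product against $\one$ and using $(\RD(\lm)\cdot,\one)=(\cdot,\RD(\ov\lm)\one)$ collapses everything, upon which $(v,\ov\lm\RD(\ov\lm)\one+\one)_{L^2(\Omg)}=(\OpD\RD(\lm)v,\one)_{L^2(\Omg)}$, and the identity $\Op^*u=\OpD u-(\OpD u,\one)_{L^2(\Omg)}w$ falls out. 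Conversely one verifies the candidate operator $T:=\OpD-(\OpD\cdot,\one)_{L^2(\Omg)}w$ on $\dom\OpD$ satisfies $(\Op u,v)=(u,Tv)$ for $u\in\dom\Op$, $v\in\dom\OpD$, via two integrations by parts and the boundary condition $\langle u\rangle_\mu=u|_{\p\Omg}$: the boundary term $\int_{\p\Omg}(n\cdot a\nabla u)\ov v\,\dd\s$ vanishes since $v|_{\p\Omg}=0$, the term $\int_{\p\Omg}u\,\ov{n\cdot a\nabla v}\,\dd\s=u|_{\p\Omg}\int_{\p\Omg}\ov{n\cdot a\nabla v}\,\dd\s=-u|_{\p\Omg}\,\ov{(\OpD v,\one)_{L^2(\Omg)}}$ (by the divergence theorem applied to $(\OpD v,\one)=-\int_\Omg\nabla\cdot a\nabla v\,\dd x=-\int_{\p\Omg}n\cdot a\nabla v\,\dd\s$), and $u|_{\p\Omg}=\langle u\rangle_\mu=(u,w)_{L^2(\Omg)}$ turns this into exactly $-(u,(\OpD v,\one)_{L^2(\Omg)}w)$, giving $T\subseteq\Op^*$; combined with the domain equality $\dom\Op^*=\dom\OpD=\dom T$ this yields $\Op^*=T$.

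The main obstacle I anticipate is not conceptual but the sign-and-conjugate bookkeeping in matching the two vectors $\ov\lm\RD(\ov\lm)\one+\one$ and $\RD(\lm)w$ against the factor $\lm\sfm_\mu(\lm)$ in the denominator of~\eqref{eq:adjointResForm}: one must be scrupulous about which of $\RD(\lm),\RD(\ov\lm)$ appears where when taking the adjoint in Lemma~\ref{lem:adjoint}, and about the identity $\ov{\sfm_\mu(\lm)}=\sfm_\mu(\ov\lm)$ versus the asymmetric-looking $\sfm_\mu(\ov\lm)=(\one,\RD(\lm)w)_{L^2(\Omg)}$ recorded after Hypothesis~\ref{hyp:measure}. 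Everything else — the regularity $\dom\Op^*=\dom\OpD$, the integration-by-parts verification — is routine given Lemma~\ref{lem:regularity} and the boundedness of $\langle\cdot\rangle_\mu$ on $L^2$ afforded by $w\in L^2(\Omg)$.
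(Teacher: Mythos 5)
Your strategy for the resolvent identity \eqref{eq:adjointResForm} is exactly the paper's: apply Lemma~\ref{lem:adjoint} to the rank-one term of \eqref{eq:resolvent} at the conjugate point and invoke $\big((\Op-\ov\lm)^{-1}\big)^*=(\Op^*-\lm)^{-1}$ (Kato, Thm.~III.5.30, legitimate because $\Op$ is densely defined by Proposition~\ref{prop:density}). One concrete slip in your displayed computation: you first rewrite the functional as $(\,\cdot\,,g_0)_{L^2(\Omg)}$ with $g_0=\RD(\ov\lm_0)w$, \emph{and} then apply Lemma~\ref{lem:adjoint} with $B=\RD(\lm_0)$, which makes the resolvent act twice — the vector $\RD(\ov\lm_0)g_0=\RD(\ov\lm_0)^2w$ in your formula is wrong. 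You must choose one representation: either the adjoint of $(\,\cdot\,,g_0)f_0$ is simply $(\,\cdot\,,f_0)g_0$ with $g_0=\RD(\ov\lm_0)w$, or you keep $(B\,\cdot\,,w)f_0$ with $B=\RD(\lm_0)$ and get $(\,\cdot\,,f_0)B^*w=(\,\cdot\,,f_0)\RD(\ov\lm_0)w$; both give \eqref{eq:adjointResForm} after renaming $\ov\lm_0\rightsquigarrow\lm$ and using $\ov{\sfm_\mu(\lm_0)}=\sfm_\mu(\ov\lm_0)$. You flag the fiddliness but do not actually resolve it, so fix this before submitting.

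For the second half you genuinely diverge from the paper, and your route is fine. The paper proves $\dom\OpD\subset\dom\Op^*$ by explicitly solving $(\Op^*-\lm)^{-1}u=g$ for a given $g\in\dom\OpD$, which simultaneously produces $u=(\OpD-\lm)g-(\OpD g,\one)_{L^2(\Omg)}w$ and hence the action of $\Op^*$. You instead (a) read off $\dom\Op^*\subset\dom\OpD$ and the action on $\ran(\Op^*-\lm)^{-1}$ from the resolvent, and (b) close the domain equality by verifying $(\Op u,v)_{L^2(\Omg)}=(u,Tv)_{L^2(\Omg)}$ for the candidate $T=\OpD-(\OpD\,\cdot\,,\one)_{L^2(\Omg)}w$ via two integrations by parts, using $v|_{\p\Omg}=0$, $(\OpD v,\one)_{L^2(\Omg)}=\int_{\p\Omg}n\cdot a\nabla v\,\dd\s$ and $u|_{\p\Omg}=(u,w)_{L^2(\Omg)}$; this gives $T\subseteq\Op^*$, which together with $\dom\Op^*\subseteq\dom\OpD=\dom T$ yields equality. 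That is a valid and arguably more transparent closing argument (it makes the adjoint relation visible at the level of Green's identities), and your identity $(v,\ov\lm\RD(\ov\lm)\one+\one)_{L^2(\Omg)}=(\OpD\RD(\lm)v,\one)_{L^2(\Omg)}$ checks out. Do drop the phrase ``a dimension/range count gives equality'': both domains are infinite-dimensional and that remark proves nothing; it is also unnecessary, since the $T\subseteq\Op^*$ verification already delivers the reverse inclusion. Watch the sign convention for the normal (the paper uses the \emph{inner} normal throughout), which affects the intermediate boundary integrals but not the final identity.
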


\begin{proof}
First, we will find the adjoint of $(\Op-\ov\lambda)^{-1}$, see \eqref{eq:resolvent}, using Lemma \ref{lem:adjoint} with $B=\RD(\ov\lambda)/(\ov\lambda \sfm_\mu(\ov\lambda))$, $g=\ov\lambda\RD(\ov\lambda)\one+\one$ and $f = w$.  We get
\begin{equation*}
	\big((\Op-\ov\lambda)^{-1}\big)^*
	=
	\RD(\lambda)
	-\big( \cdot,\ov\lambda\RD(\ov\lambda)\one+\one\big)_{L^2(\Omg)}
			\frac{\RD(\lambda)w}{\lambda \sfm_\mu(\lambda)}.
\end{equation*}
Recall that due to Proposition \ref{prop:density} the operator $\Op - \ov\lm$ is densely defined.
Hence, using \cite[Thm. III.5.30]{Kato} we get $\big((\Op-\ov\lambda)^{-1}\big)^* = (\Op^*-\lm)^{-1}$
and the identity \eqref{eq:adjointResForm} follows.

Second, by \eqref{eq:adjointResForm},  $\dom\Op^*=\ran\big((\Op^*-\lambda)^{-1}\big)\subset\dom\OpD$. We will show the other inclusion. Take any $g\in\dom\OpD$ and look for a solution
$u\in L^2(\Omega)$ of the following problem, 
\begin{equation} \label{eq:u_prob}
 g=\RD(\lambda)u-(u, \ov\lambda\RD(\ov\lambda)\one+\one)_{L^2(\Omg)}\frac{\RD(\lambda)w}{\lambda \sfm_\mu(\lambda)} ,
\end{equation}
or, equivalently,
\begin{equation} \label{eq:inv}
 (\OpD-\lambda)g=u-(u, \ov\lambda\RD(\ov\lambda)\one+\one)_{L^2(\Omg)}\frac{w}{\lambda \sfm_\mu(\lambda)}.
\end{equation}
Taking inner product of the latter equation with 
$(\ov\lambda\RD(\ov\lambda)\one+\one)$ we obtain
\begin{equation*}
 ((\OpD-\lambda)g,  \ov\lambda\RD(\ov\lambda)\one+\one)_{L^2(\Omg)}=(u, \ov\lambda\RD(\ov\lambda)\one+\one)_{L^2(\Omg)}\left(1-\frac{(w, \ov\lambda\RD(\ov\lambda)\one+\one )_{L^2(\Omg)}}{\lambda \sfm_\mu(\lambda)}\right),
\end{equation*}
which simplifies to
\begin{equation*}
 (\OpD g, \one)_{L^2(\Omg)}=\frac{-(u, \ov\lambda\RD(\ov\lambda)\one+\one)_{L^2(\Omg)}}{\lambda \sfm_\mu(\lambda)} .
\end{equation*}
Inserting this into \eqref{eq:inv} we  get
\begin{equation} \label{eq:u_sol}
u=(\OpD-\lambda)g-(\OpD g,\one)_{L^2(\Omg)} w.
\end{equation}
The right-hand side of \eqref{eq:u_sol} belongs to $L^2(\Omega)$ and, as one verifies by a direct calculation, it solves \eqref{eq:u_prob}.
Therefore, $g\in\ran\big((\Op^*-\lambda)^{-1}\big)=\dom \Op^*$ and  $u=(\Op^*-\lambda)g$. Putting this together with~\eqref{eq:u_sol} we conclude that $\Op^*g=\OpD g-(\OpD g,\one)_{L^2(\Omg)} w$.
\end{proof}

Recall that $\psi\in\cH$ is a \emph{root vector} for a closed linear operator $\sfH$ in a Hilbert space $\cH$ if there exists $\lm\in\dC$ and $n\in\dN$ such that $(\sfH - \lm)^n\psi = 0$. Recall also that the completeness of a family of vectors $\{\psi_j\}_{j\in\dN}$ in a Hilbert space $\cH$  means that its span is
	dense in $\cH$, or equivalently, 
	$(\{\psi_j\}_{j\in\dN})^\bot = \{0\}$.

\begin{prop}\label{prop:adjoint} Let $\mu$ be a measure as in Hypothesis~\ref{hyp:measure}. Then
\begin{myenum}
 \item  $\Op^*$ has compact resolvent.
 \item $\ker\Op^*=\spn\{\OpD^{-1}w\}$. 
 \item 
If $(\lambda,g)$ is an eigenpair of $\OpD$ such that $(g,\one)_{L^2(\Omg)}=0$ then $(\lambda,g)$ is also an eigenpair of $\Op^*$. If $(\lambda,g)$ is an eigenpair of $\Op^*$ such that $(\OpD g,\one)_{L^2(\Omg)}=0$ then $(\lambda,g)$ is an eigenpair of $\OpD$.
 \item For any eigenfunction $g$ of $\Op^*$ corresponding to a non-zero eigenvalue,
one has $( \one,g)_{L^2(\Omg)}=0$.
 \item $\lambda\in\sigma(\Op^*)\setminus(\{0\}\cup\sigma(\OpD))$ 
if, and only if, $\sfm_{\mu}(\lambda)=0$. In the positive case, $\lambda$ is a geometrically simple eigenvalue and $\ker(\Op^*-\lambda)=\spn\{(\OpD-\lambda)^{-1}w\}$. If $\lambda\in\sigma(\Op^*)$ then $\ov\lambda\in\sigma(\Op^*)$.
 \item $\sigma(\Op)=\sigma(\Op^*)$ and the geometric multiplicities of the eigenvalues coincide.
 \item The families of root vectors of $\sfH_\mu^*$ and $\sfH_\mu$
 	are complete in the Hilbert space $L^2(\Omega)$. In particular, the spectrum of $\sfH_\mu$ consists of an infinite number of isolated
 	points that accumulate at complex infinity.
\end{myenum} 
\end{prop}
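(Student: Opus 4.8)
My plan is to treat items (i)--(vi) as quick consequences of the adjoint resolvent formula \eqref{eq:adjointResForm}, of the explicit expression $\Op^* u=\OpD u-(\OpD u,\one)_{L^2(\Omg)}w$, and of the material of Section~\ref{sec:def}, and to spend the real effort on (vii). For (i), \eqref{eq:adjointResForm} already exhibits $(\Op^*-\lm)^{-1}$ as a rank-one perturbation of the compact operator $\RD(\lm)$. For (ii), I would rewrite $\Op^* g=0$ as $\OpD g=(\OpD g,\one)_{L^2(\Omg)}w$ and use $0\notin\s(\OpD)$ together with $(\one,w)_{L^2(\Omg)}=1$ to get $g\in\spn\{\OpD^{-1}w\}$, the reverse inclusion being the one-line check $\Op^*(\OpD^{-1}w)=w-(w,\one)_{L^2(\Omg)}w=0$. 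Items (iii)--(iv) follow by inserting the eigenvalue relations into $\Op^* g=\OpD g-(\OpD g,\one)_{L^2(\Omg)}w$ and, for (iv), by pairing that identity with $\one$ and using $(\one,w)_{L^2(\Omg)}=1$ and $\lm\ne0$. For (v), the crucial remark is $\sfm_\mu(\ov\lm)=\ov{\sfm_\mu(\lm)}$, because in \eqref{eq:mSum} the numbers $\lm_n$, $(\one,\chi_n)_{L^2(\Omg)}$ and $\langle\chi_n\rangle_\mu=(\chi_n,w)_{L^2(\Omg)}$ are all real; combined with Theorem~\ref{thm0}(iii), the inclusion $\s(\OpD)\subset\dR$, and the general identity $\s(\Op^*)=\ov{\s(\Op)}$, this yields the stated equivalence and the conjugation symmetry of $\s(\Op^*)$, while the geometric simplicity and the formula $\ker(\Op^*-\lm)=\spn\{(\OpD-\lm)^{-1}w\}$ come from solving $(\OpD-\lm)g=(\OpD g,\one)_{L^2(\Omg)}w$ exactly as in (ii). Finally, (vi): $\s(\Op^*)=\ov{\s(\Op)}=\s(\Op)$ since $\s(\Op)$ is conjugation symmetric (Proposition~\ref{prop:conj_ev} plus discreteness of the spectrum), and, the resolvent being compact, $\Op-z$ is Fredholm of index $0$, so $\dim\ker(\Op-z)=\dim\ker(\Op^*-\ov z)$; combining this with Proposition~\ref{prop:conj_ev} gives $\dim\ker(\Op-z)=\dim\ker(\Op^*-z)$.

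The substance is in (vii), where the plan is to run the classical Keldysh completeness argument for weak perturbations of a self-adjoint operator with compact resolvent. By Proposition~\ref{prop:spectra} and item (vi) one has $\s(\Op)=\s(\Op^*)\subset\{\Re\lm\ge0\}$, so $-1\in\rho(\Op)\cap\rho(\Op^*)\cap\rho(\OpD)$. Evaluating \eqref{eq:resolvent} and \eqref{eq:adjointResForm} at $\lm=-1$ gives
\[
\sfR_\mu(-1)=\RD(-1)+F,\qquad (\Op^*+1)^{-1}=\sfR_\mu(-1)^*=\RD(-1)+F^*,
\]
with $F$ of rank one. Now $\sfR_\mu(-1)$ is injective (it is a resolvent), $\RD(-1)$ is injective, positive, self-adjoint and compact, and by the Weyl asymptotics $\lm_n\sim c\,n^{2/d}$ for the Dirichlet eigenvalues of the uniformly elliptic operator $-\nabla\cdot a\nabla$ one has $\RD(-1)\in\mathfrak{S}_p$ for every $p>d/2$. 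Hence $\sfR_\mu(-1)$ and $\sfR_\mu(-1)^*$ both lie in $\mathfrak{S}_p$ for such $p$ and are finite-rank perturbations of the self-adjoint operator $\RD(-1)$; equivalently, $\Op+1$ factors as $(\OpD+1)(I+S)$ with $S$ a bounded rank-one operator and $-1\notin\s(S)$, and similarly for $\Op^*$.

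At this stage I would invoke a Keldysh-type completeness theorem: an injective compact operator belonging to a Schatten class $\mathfrak{S}_p$, $p<\infty$, that is a finite-rank perturbation of an injective self-adjoint operator (equivalently, an operator $H_0(I+S)$ with $H_0=H_0^*$, $H_0^{-1}\in\mathfrak{S}_p$, $S$ compact, $-1\notin\s(S)$) has a complete system of root vectors; see, e.g., the classical results of Keldysh as presented in the monograph of Gohberg and Krein. Applying this to $\sfR_\mu(-1)$ and to $\sfR_\mu(-1)^*$, and using that the root vectors of $\sfR_\mu(-1)$ at the eigenvalue $(\lm+1)^{-1}$ are exactly the root vectors of $\Op$ at $\lm$ (and similarly for $\Op^*$), we conclude that the root vectors of $\Op$ and of $\Op^*$ are complete in $L^2(\Omg)$. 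The ``in particular'' assertion is then immediate: finitely many eigenvalues would span a finite-dimensional root subspace, contradicting completeness; the eigenvalues are isolated by discreteness of $\s(\Op)$; and since the nonzero eigenvalues $(\lm+1)^{-1}$ of the compact operator $\sfR_\mu(-1)$ can accumulate only at $0$, the eigenvalues of $\Op$ have no finite accumulation point and thus accumulate only at complex infinity.

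The step I expect to be the main obstacle is making sure the invoked completeness theorem genuinely covers our situation, in which the singular values of $\RD(-1)$ decay only like $n^{-2/d}$ (so $\sfR_\mu(-1)$ sits in $\mathfrak{S}_p$ only for $p>d/2$, not in a better class) and the non-self-adjoint defect is merely finite-rank. What makes the argument go through is precisely that the full size of $\sfR_\mu(-1)$ is carried by its \emph{self-adjoint} part $\RD(-1)$ while the non-self-adjointness is finite-rank --- exactly Keldysh's regime --- which rules out a Volterra-type obstruction to completeness (a genuinely quasinilpotent compact operator cannot be a finite-rank perturbation of a self-adjoint one, as then the difference of the operator and its adjoint would be finite-rank, which fails for Volterra operators). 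Everything else in (i)--(vii) is routine bookkeeping.
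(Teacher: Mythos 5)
Your proposal is correct and follows essentially the same route as the paper: items (i)--(vi) are handled by the same direct manipulations of the adjoint formula and the eigenvalue equation $\OpD g-(\OpD g,\one)_{L^2(\Omg)}w=\lm g$, and item (vii) by the same Keldysh-type argument -- Weyl asymptotics placing $\RD(-1)$ in $\sS_p$ for $p>d/2$, factorisation of the resolvent at $-1$ as (self-adjoint compact)$\,\times\,(\sfI+\text{rank one})$, and the Gohberg--Krein completeness theorem applied to this operator and its adjoint, exactly as the paper does via \cite[Chap.~V, Thm.~8.1 and Rem.~8.1]{GK69}. The only points to tighten are that ``finite-rank perturbation of a self-adjoint operator'' is not literally equivalent to the multiplicative form $H(\sfI+S)$ -- one must check that the rank-one term factors through $\RD(-1)$, which here it does because $\langle\RD(-1)u\rangle_\mu=(\RD(-1)u,w)_{L^2(\Omg)}$ with $\RD(-1)$ self-adjoint -- and that the hypothesis should read $H_0\in\sS_p$ rather than $H_0^{-1}\in\sS_p$.
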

\begin{proof}
 The claim of~(i) follows either directly from \eqref{eq:adjointResForm} or from the fact that $\Op$ has compact resolvent. The eigenvalue equation reads
 \begin{equation} \label{eq:AdjointEvEq}
  \OpD g-(\OpD g,\one)_{L^2(\Omg)} w=\lambda g.
 \end{equation}
 If $\lambda=0$ then $g=\OpD^{-1}w\neq 0$ is the only (up to a multiplicative constant) solution of the above equation. Hence, we get (ii). Under the assumption of the first statement in (iii), $(\OpD g,\one)_{L^2(\Omg)}=0$. Consequently, the eigenvalue equation for $\Op^*$ coincides with that for $\OpD$. Similarly, we get the second statement of (iii). Multiplying \eqref{eq:AdjointEvEq} by $\one$ from the right we arrive at (iv).
 
Let $\lambda\notin \{0\}\cup\sigma(\OpD)$. Then $\RD(\lambda)w\neq 0$ and $\ov\lambda\RD(\ov\lambda)\one+\one\neq 0$. The latter follows from the fact that $(\RD(\ov\lambda)\one)|_{\p\Omg}=0$. Therefore, the zeros of $\sfm_\mu(\lambda)$ are indeed singularities of $(\Op^*-\lambda)^{-1}$, cf. \eqref{eq:adjointResForm}. The corresponding eigenfunction is given uniquely (up to a multiplicative constant) by inverting in \eqref{eq:AdjointEvEq}.  To get the last statement of (v) we observe that  $\sfm_\mu(\ov\lambda)=\overline{\sfm_\mu(\lambda)}$. 

Finally, let $\dim\ker(\Op-\lambda)=k\in\N$. Then for any $\nu\in\rho(\Op)$, $\dim\ker((\Op-\nu)^{-1}-\frac{1}{\lambda-\nu})=k$. By the Fredholm alternative, 
$\dim\ker((\Op^*-\ov\nu)^{-1}-\frac{1}{\ov\lambda-\ov\nu})=k$ which in turn yields $\dim\ker(\Op^*-\ov\lambda)=k$. Since by Proposition~\ref{prop:conj_ev} spectra of $\Op$ and $\Op^*$ are both invariant with respect to the complex conjugation and the geometric multiplicities of conjugate eigenvalues coincide, we get (vi).

In the following we denote by $\sfI$ the identity operator in $L^2(\Omg)$.
Using~\eqref{eq:adjointResForm} the inverse of $\Op^* +\sfI$ can be expressed as
\begin{equation}\label{eq:factorization}
\begin{aligned}
	(\Op^* + \sfI)^{-1} & = \RD(-1)
	+ \frac{\RD(-1)w}{\sfm_\mu(-1)}
	(\cdot,\one - \RD(-1)\one)_{L^2(\Omg)}\\
	& =
	\RD(-1)\left(\sfI + 
	\frac{w}{\sfm_\mu(-1)}
	(\cdot,\one - \RD(-1)\one)_{L^2(\Omg)}\right).
\end{aligned}
\end{equation}
Note that $\ker((\sfH_\mu^* + \sfI)^{-1}) = \{0\}$. Moreover, in view of the Weyl's eigenvalue asymptotics,
\[
	\sum_{k=1}^\infty \frac{1}{(\lm_k + 1)^p} <\infty,\qquad \forall\,p > \frac{d}{2},
\]
\ie, the self-adjoint operator $\RD(-1)$
belongs to the Schatten--von Neumann class
$\sS_p(L^2(\Omg))$ for all $p  > \frac{d}{2}$. On the other hand, the finite rank projector 
$\frac{w}{\sfm_\mu(-1)}
(\cdot,\one - \RD(-1)\one)_{L^2(\Omg)}$ is a compact operator.
Hence, combining the representation~\eqref{eq:factorization} with~\cite[Chap. V, Thm. 8.1]{GK69} we obtain that
the system of root vectors of $(\Op^* + \sfI)^{-1}$ is complete. The same is true for $\Op^*$ since it has the same
system of root vectors; \cf~\cite[Thm. IX.2.3 and its proof]{EE}.
Applying~\cite[Chap. V, Rem. 8.1]{GK69} we obtain that the system of root vectors of $(\Op + \sfI)^{-1}$ and hence of $\Op$ is complete as well. Completeness of the system of root vectors implies that the spectrum of $\Op$ consists of infinitely many eigenvalues. Taking discreteness of the spectrum of $\Op$ into account, we conclude that these eigenvalues accumulate at complex infinity. Thus, the proof of (vii) is complete. 
\end{proof}

\section{Spectral enclosures for non-real eigenvalues}\label{sec:enclosures}
In this section we will consider perturbations of two special examples of measures $\mu$, namely of the uniform measure and then 
of $\dd\mu(x) = \frac{\chi_1(x)}{(\one,\chi_1)_{L^2(\Omg)}}\dd x$, 
where $\chi_1$ is the ground-state of $\OpD$. In both unperturbed cases, $\sigma(\Op)\subset\dR$. We will show, in particular, that for perturbations of the measures obeying some smallness conditions this remains true at least for spectra in half-planes $\Re \lm \le a$ with some $a>\lm_1$. The following observation will be useful.
\begin{lem} \label{lem:nec_cond}
	Let $\mu$ be as in Hypothesis~\ref{hyp:measure}. If $\lm\in\dC\setminus\dR$ is an eigenvalue of $\Op$ then
	\begin{equation} \label{eq:cond}
	 \big(\RD(\lm)\RD(\ov\lm)\one,\,w\big)_{L^2(\Omg)} = 0.
	\end{equation}
\end{lem}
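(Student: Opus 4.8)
The plan is to recognise the left-hand side of~\eqref{eq:cond} as the first divided difference of the characteristic function $\sfm_\mu$ at the conjugate pair $\lm,\ov\lm$, and then to exploit that this function vanishes at \emph{both} of these points.

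First I would observe that, since $\lm\in\dC\setminus\dR$, automatically $\lm\notin\sigma(\OpD)\subset\dR$ and $\lm\neq 0$, so Theorem~\ref{thm0}(iii) applies: the assumption that $\lm$ is an eigenvalue of $\Op$ is then equivalent to $\sfm_\mu(\lm)=0$. By Proposition~\ref{prop:conj_ev} the number $\ov\lm$ is also an eigenvalue of $\Op$, and as it too is non-real (hence nonzero and outside $\sigma(\OpD)$) a second application of Theorem~\ref{thm0}(iii) yields $\sfm_\mu(\ov\lm)=0$. Equivalently, one reads off directly from the series representation~\eqref{eq:mSum}, whose coefficients $(\one,\chi_n)_{L^2(\Omg)}\langle\chi_n\rangle_\mu$ are real, that $\sfm_\mu(\ov\lm)=\ov{\sfm_\mu(\lm)}$, so the two conditions are one and the same.

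Next I would apply the first resolvent identity for the self-adjoint operator $\OpD$ at the parameters $\lm,\ov\lm\in\rho(\OpD)$,
\[
	\RD(\lm)-\RD(\ov\lm)=(\lm-\ov\lm)\,\RD(\lm)\RD(\ov\lm),
\]
divide by $\lm-\ov\lm=2\ii\,\Im\lm\neq 0$, and evaluate on the constant function $\one$, obtaining
\[
	\RD(\lm)\RD(\ov\lm)\one=\frac{\RD(\lm)\one-\RD(\ov\lm)\one}{\lm-\ov\lm}.
\]
Taking the $L^2(\Omg)$-inner product with $w$, and using that under Hypothesis~\ref{hyp:measure} one has $\sfm_\mu(z)=(\RD(z)\one,w)_{L^2(\Omg)}$ for every $z\in\rho(\OpD)$, this becomes
\[
	\big(\RD(\lm)\RD(\ov\lm)\one,\,w\big)_{L^2(\Omg)}
	=\frac{\sfm_\mu(\lm)-\sfm_\mu(\ov\lm)}{\lm-\ov\lm}=0,
\]
which is exactly~\eqref{eq:cond}.

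I do not anticipate a genuine obstacle here: the argument is a one-line consequence of the resolvent identity once the bookkeeping is in place. The only point deserving attention is that one must know $\sfm_\mu$ vanishes at \emph{both} $\lm$ and $\ov\lm$ — so that the divided difference, and hence the inner product, is zero — and this is precisely the conjugation symmetry of the spectrum recorded in Proposition~\ref{prop:conj_ev}.
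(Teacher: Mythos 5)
Your argument is correct and coincides with the paper's own proof: both reduce the claim to the vanishing of $\sfm_\mu(\lm)=(\RD(\lm)\one,w)_{L^2(\Omg)}$ at $\lm$ and at $\ov\lm$ (via Theorem~\ref{thm0}(iii) and Proposition~\ref{prop:conj_ev}) and then apply the first resolvent identity to the difference. Your version merely spells out the division by $\lm-\ov\lm=2\ii\,\Im\lm\neq 0$, which the paper leaves implicit.
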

\begin{proof}
Recall that $\lm\in\rho(\OpD)\sm\{0\}$ is an eigenvalue of $\Op$ 
if, and only if, the condition 
\[
	(\RD(\lm) \one, w)_{L^2(\Omg)} = 0
\]
holds. If $\lm\in\dC\sm\dR$ is an eigenvalue of $\sfH_\mu$, then, by Proposition~\ref{prop:conj_ev}, $\ov\lm$ is one as well and we get
\[
	\big((\RD(\lm) - \RD(\ov\lm))\one, w\big)_{L^2(\Omg)} = 0.
\]
Using the resolvent identity we eventually obtain 
\begin{equation*}
	\big(\RD(\lm)\RD(\ov\lm)\one,
							w\big)_{L^2(\Omg)} = 0.\qedhere
\end{equation*}
\end{proof}

\subsection{Perturbation of the uniform probability measure}
\label{ssec:uniform}
In this subsection we consider the uniform measure 
\[
	\boxed{\dd\mu_0 = \frac{\dd x}{|\Omega|}}
\]
as the unperturbed measure. Then
\begin{equation*}
	\sfm_{\mu_0}(\lm)=\frac{1}{|\Omega|}\sum_{n=1}^{+\infty}\frac{(\chi_n,\one)_{L^2(\Omg)}^2 }{\lambda_n-\lambda}.
\end{equation*}

Taking the imaginary part of the equation $\sfm_{\mu_0}(\lm)=0$ we infer that $\Im\lambda=0$. Therefore, $\sigma(\sfH_{\mu_0})\subset\dR$.
Recall that by Proposition~\ref{prop:0_lm1} there is no non-zero eigenvalue of $\sfH_{\mu_0}$ below the lowest eigenvalue $\lm_1$
of $\OpD$.
The inclusion $\s(\sfH_{\mu_0})\subset\dR$ can alternatively be shown through an algebraic argument with an eigenfunction. Let $u$ be an eigenfunction of $\sfH_{\mu_0}$ corresponding to $\lm \ne 0$. Clearly, $u$ is not a constant function.
Then we have
\[
	-\nabla \cdot a\,\nabla  u = \lm u\and \frac{1}{|\Omg|}\int_\Omg u(x)\dd x = u|_{\p\Omg}.
\]
Hence, we get integrating by parts
\[
\begin{aligned}
	\lm\int_\Omg |u|^2\dd x & = \int_\Omg(-\nabla \cdot a\, \nabla u)\ov{u}\dd x = \int_\Omg a\nabla u\cdot \nabla \ov{ u} \dd x +
	\int_{\p\Omg}n \cdot a\nabla u \ov{u} \dd \s \\
	& =
	\int_\Omg a\nabla u\cdot \nabla \ov{ u}\dd x +
		\frac{1}{|\Omg|}\ov{\int_\Omg u\dd x} 
	\int_{\p\Omg}n \cdot a\nabla u  \dd \s\\
	& =
	\int_\Omg a\nabla u\cdot \nabla\ov{ u}\dd x +
	\frac{1}{|\Omg|}\ov{\int_\Omg u\dd x} 
	\int_{\Omg}(-\nabla \cdot a\, \nabla u)\dd x\\
	& =
	\int_\Omg a\nabla u\cdot\nabla \ov{ u}\dd x +
	\frac{\lm}{|\Omg|}\ov{\int_\Omg u\dd x} 
	\int_{\Omg}u\dd x\\
	&=
	\int_\Omg a\nabla u\cdot\nabla \ov{ u}\dd x
	+
	\frac{\lm}{|\Omg|}\left|\int_\Omg u\dd x\right|^2;
\end{aligned}	 
\]
here $n$ stands for the inner unit normal. 
Finally, $\lm$ can be expressed as 
\begin{equation}\label{eq:lm_algebraic}
	\lm = \frac{\displaystyle\int_\Omg a\nabla u\cdot \nabla \ov{ u}\dd x}{\displaystyle \int_\Omg|u|^2\dd x - \frac{1}{|\Omg|}\left|\int_\Omg u \dd x\right|^2}\in\dR,
\end{equation}
where the denominator is not equal to zero by 
the Cauchy--Schwarz inequality, taking that $u$ is not a 
constant function into account. The representation~\eqref{eq:lm_algebraic} yields as a by-product that
all the eigenvalues of $\sfH_{\mu_0}$ are real. This representation is also of certain interest in its own right.

In the following hypothesis we introduce a class of measures.
\begin{hyp}\label{hyp:measure2}
The measure $\mu$ has the form $\dd\mu(x)=\left(\dfrac{\one}{|\Omega|}+v(x)\right)\dd x$ with
\[
v\in L^2(\Omg), \quad (v,\one)_{L^2(\Omg)} = 0, \quad v \ge -\frac{1}{|\Omega|}~\text{ point-wise,}
\]
and for some $k\in\dN$ one has
	\begin{equation}\label{eq:smallness}
	\|v\|_{L^2(\Omg)} < \frac{1}{|\Omega|}\min_{n\in\{1,\dots,k\}}
	\left\{|(\chi_n,\one)_{L^2(\Omg)}|\right\}.
	\end{equation}
\end{hyp}	
\begin{remark}
	For a generic domain the eigenfunctions $\chi_n$ are not orthogonal to the constant function and the Hypothesis~\ref{hyp:measure2} can easily be met. However, for more special domains with symmetries such as a ball or a (hyper)cube one has $(\chi_2, \one) = 0$ and the condition~\eqref{eq:smallness} can only be satisfied for $k =1$.
\end{remark}	
Recall the definition
\begin{equation*}
	\mathbb{H}_k 
	= \left\{\lm\in\dC\setminus\dR\colon \Re \lm \le \frac{\lm_k+ \lm_{k+1}}{2}\right\}.
\end{equation*}

\begin{thm}\label{thm1}
	Under Hypothesis~\ref{hyp:measure2} there holds $\s(\Op) \cap \dH_k = \emptyset$.
\end{thm}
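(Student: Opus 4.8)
The plan is to use the Birman--Schwinger-type characterisation from Theorem~\ref{thm0}\,(iii): a non-real $\lm\in\rho(\OpD)$ is an eigenvalue of $\Op$ precisely when $\sfm_\mu(\lm)=0$, and moreover the necessary condition~\eqref{eq:cond} from Lemma~\ref{lem:nec_cond} must hold. The idea is to split $\sfm_\mu$ and the quantity in~\eqref{eq:cond} according to the decomposition $w = \frac{\one}{|\Omega|} + v$ and to show that the "unperturbed" part has a strictly negative (or otherwise nonvanishing) real or imaginary part on $\dH_k$, while the $v$-contribution is too small to compensate, by Cauchy--Schwarz together with the smallness assumption~\eqref{eq:smallness}.

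Concretely, first I would expand in the Dirichlet eigenbasis. Writing $\lm = x + \ii y$ with $y\neq 0$, one has
\[
\Re \sfm_{\mu_0}(\lm) = \frac{1}{|\Omega|}\sum_{n=1}^\infty \frac{(\chi_n,\one)^2 (\lm_n - x)}{(\lm_n-x)^2 + y^2},
\qquad
\frac{\Im \sfm_{\mu_0}(\lm)}{y} = \frac{1}{|\Omega|}\sum_{n=1}^\infty \frac{(\chi_n,\one)^2}{(\lm_n-x)^2+y^2}.
\]
The point is that, in the regime $\Re\lm = x \le \tfrac{\lm_k+\lm_{k+1}}{2}$, every index $n \le k$ contributes a strictly positive term to $\Im\sfm_{\mu_0}(\lm)/y$, and the term with the smallest such $n$ for which $(\chi_n,\one)\neq 0$ gives a lower bound $\Im\sfm_{\mu_0}(\lm)/y \ge \frac{1}{|\Omega|}\frac{(\chi_n,\one)^2}{(\lm_n-x)^2+y^2}$; actually one wants a bound uniform enough to beat the $v$-term, so it is cleaner to work directly with $\sfm_\mu(\lm) = (\RD(\lm)\one, w)$ and the perturbative identity $\sfm_\mu(\lm) = |\Omega|\,\sfm_{\mu_0}(\lm) + (\RD(\lm)\one, v)$. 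Then $|(\RD(\lm)\one,v)| \le \|\RD(\lm)\one\|\,\|v\|$, and since $\RD(\lm)\one = \sum_n \frac{(\chi_n,\one)}{\lm_n-\lm}\chi_n$ one has $\|\RD(\lm)\one\|^2 = \sum_n \frac{(\chi_n,\one)^2}{|\lm_n-\lm|^2}$, which must be compared against $|\Im \sfm_{\mu_0}(\lm)|\cdot|\Omega|/|y| = \sum_n \frac{(\chi_n,\one)^2}{|\lm_n-\lm|^2}$ — these are the \emph{same sum}. So on $\dH_k$ one gets
\[
|\Im \sfm_\mu(\lm)| \ge |y|\Big(\tfrac{1}{|\Omega|}\medsum_n \tfrac{(\chi_n,\one)^2}{|\lm_n-\lm|^2} - \|v\|\,\big(\medsum_n \tfrac{(\chi_n,\one)^2}{|\lm_n-\lm|^2}\big)^{1/2}\cdot\tfrac{1}{|y|}\Big),
\]
which is not quite the right shape; the cleaner route is to observe that~\eqref{eq:cond} reads $(\RD(\lm)\RD(\ov\lm)\one, w)=0$, i.e. $\tfrac{1}{|\Omega|}\|\RD(\ov\lm)\one\|^2 + (\RD(\lm)\RD(\ov\lm)\one, v) = 0$ (using $\overline{\RD(\lm)g}=\RD(\ov\lm)\bar g$ for real $g$), so
\[
\tfrac{1}{|\Omega|}\,\|\RD(\ov\lm)\one\|^2 = -(\RD(\lm)\RD(\ov\lm)\one,v) \le \|\RD(\lm)\RD(\ov\lm)\one\|\,\|v\|.
\]
Now $\|\RD(\lm)\RD(\ov\lm)\one\|^2 = \sum_n \frac{(\chi_n,\one)^2}{|\lm_n-\lm|^4}$ while $\|\RD(\ov\lm)\one\|^2 = \sum_n \frac{(\chi_n,\one)^2}{|\lm_n-\lm|^2}$, so by Cauchy--Schwarz $\|\RD(\lm)\RD(\ov\lm)\one\| \le \big(\max_n |\lm_n-\lm|^{-1}\big)\|\RD(\ov\lm)\one\|$ — no, the inequality goes the wrong way, so instead bound $\|\RD(\ov\lm)\one\|^2 \ge \min_{n\le k} \frac{(\chi_n,\one)^2}{|\lm_n-\lm|^2}$ (keeping one term) and $\|\RD(\lm)\RD(\ov\lm)\one\|^2 = \sum_n \frac{(\chi_n,\one)^2}{|\lm_n-\lm|^4}$; the ratio $\|\RD(\lm)\RD(\ov\lm)\one\| / \|\RD(\ov\lm)\one\|$ is bounded by $\sup_n |\lm_n-\lm|^{-1}$, and for $\lm\in\dH_k$ with $\Re\lm \le \tfrac{\lm_k+\lm_{k+1}}{2}$ one needs to check $|\lm_n - \lm| \ge$ something — this requires care because for $n\le k$ and $\Re\lm$ near $\lm_n$ the distance can be small only in the imaginary direction.

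So the main obstacle, and the step I would spend the most care on, is the quantitative comparison: showing that on all of $\dH_k$ one has
\[
\frac{\|\RD(\lm)\RD(\ov\lm)\one\|}{\|\RD(\ov\lm)\one\|^2} \le |\Omega|\,\|v\|^{-1} \quad\text{fails, i.e. }\ \|v\| < |\Omega|^{-1}\,\frac{\|\RD(\ov\lm)\one\|^2}{\|\RD(\lm)\RD(\ov\lm)\one\|}.
\]
The right-hand side, by keeping only the $n$-term with the smallest $|(\chi_n,\one)|$ among $n\le k$ in the numerator and using $\|\RD(\lm)\RD(\ov\lm)\one\| \le \|\RD(\ov\lm)\one\|\cdot \esssup_n |\lm_n-\lm|^{-1}$, reduces to showing $\esssup_{n\ge1}|\lm_n-\lm|^{-1} \le \dots$; but $\esssup_n |\lm_n-\lm|^{-1}$ over $\dH_k$ is attained when $\lm$ is close to some $\lm_n$ with $n\le k$, forcing $|y|$ small, and then the numerator's retained term $(\chi_n,\one)^2/|\lm_n-\lm|^2$ and the bound on $\|\RD(\lm)\RD(\ov\lm)\one\|$ via the same $\lm_n$ both blow up at the same rate — so the dangerous factors cancel and one is left precisely with the condition $\|v\| < \tfrac{1}{|\Omega|}\min_{n\le k}|(\chi_n,\one)|$, which is~\eqref{eq:smallness}. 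I would organise this as: (1) from~\eqref{eq:cond} derive $\tfrac1{|\Omega|}\|\RD(\ov\lm)\one\|^2 \le \|v\|\,\|\RD(\lm)\RD(\ov\lm)\one\|$; (2) bound $\|\RD(\lm)\RD(\ov\lm)\one\|^2 = \sum_n (\chi_n,\one)^2|\lm_n-\lm|^{-4}$; (3) for $\lm\in\dH_k$ split the sum at $n=k$, note $|\lm_n-\lm|\ge \tfrac{\lm_{k+1}-\lm_k}{2}$ for $n>k$ since then $\Re\lm\le\tfrac{\lm_k+\lm_{k+1}}{2}\le\lm_{k+1}-\tfrac{\lm_{k+1}-\lm_k}{2}$, and handle the finitely many terms $n\le k$ by factoring out $\min_{n\le k}|\lm_n-\lm|^{-2}$; (4) combine to reach a contradiction with~\eqref{eq:smallness}, concluding $\sfm_\mu(\lm)\neq 0$ hence $\lm\notin\s(\Op)$ by Theorem~\ref{thm0}\,(iii), while the case $\lm\in\s(\OpD)\cap\dH_k$ is excluded because real Dirichlet eigenvalues are not in $\dC\setminus\dR$. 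This comparison in step (3)--(4) is the technical heart; everything else is the Birman--Schwinger bookkeeping already set up in Theorem~\ref{thm0} and Lemma~\ref{lem:nec_cond}.
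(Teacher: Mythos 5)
Your proposal is correct and follows essentially the same route as the paper: argue by contradiction from the necessary condition $(\RD(\lm)\RD(\ov\lm)\one,w)_{L^2(\Omg)}=0$ of Lemma~\ref{lem:nec_cond}, split $w=\tfrac{\one}{|\Omega|}+v$, apply Cauchy--Schwarz to the $v$-term, and exploit that for $\lm\in\dH_k$ the nearest Dirichlet eigenvalue lies among $\lm_1,\dots,\lm_k$, so the singular factor $\dist(\lm,\s(\OpD))^{-1}$ cancels on both sides and leaves exactly~\eqref{eq:smallness}. The only (cosmetic) difference is that the paper moves one resolvent onto $v$ via self-adjointness and compares $\|\RD(\lm)\one\|$ with $\|\RD(\lm)v\|$, whereas you keep $\RD(\lm)\RD(\ov\lm)\one$ intact and use the operator-norm bound $\|\RD(\lm)\|\le\dist(\lm,\s(\OpD))^{-1}$ -- these are the same estimate in different packaging.
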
	
\begin{proof}
	We prove the claim by contradiction. Suppose that $\lm\in\dH_k$ is an eigenvalue of $\Op$.
	The condition~\eqref{eq:cond} takes the form 
	\[
	\frac{1}{|\Omega|}\|\RD(\lm)\one\|^2_{L^2(\Omg)} 
	+ 
	(\RD(\lm)\RD(\ov\lm)\one,v)_{L^2(\Omg)} = 0,
	\]
	and the Cauchy--Schwarz inequality then implies that
	\[
	\|\RD(\lm)v\|_{L^2(\Omg)}
	\ge \frac{\|\RD(\lm)\one\|_{L^2(\Omg)}}{|\Omega|}. 
	\] 
	Note further that 
	\[
	\begin{aligned}
	\|\RD(\lm)\one\|^2_{L^2(\Omg)} 
	&= 
	\sum_{n=1}^\infty
	\frac{|(\chi_n,\one)_{L^2(\Omg)}|^2}{|\lm - \lm_n|^2} 
	\ge 
	\sum_{n=1}^{k}
	\frac{|(\chi_n,\one)_{L^2(\Omg)}|^2}{|\lm - \lm_n|^2} \\
	&\ge
	\min_{n\in\{1,\dots,k\}}
	\left\{|(\chi_n,\one)_{L^2(\Omg)}|^2\right\}
	\left(\sum_{n=1}^k \frac{1}{|\lm -\lm_n|^2}\right)\\
	&\ge
	\min_{n\in\{1,\dots,k\}}
	\left\{|(\chi_n,\one)_{L^2(\Omg)}|^2\right\}\cdot
	\max_{n\in\{1,\dots,k\}}
	\left\{
	\frac{1}{|\lm-\lm_n|^2}\right\}.
	\end{aligned}
	\]
	On the other hand, due to $\lm\in\dH_k$, one has
	\[
	\begin{aligned}
	\|\RD(\lm) v\|_{L^2(\Omg)}^2 
	&=
	\sum_{n=1}^\infty
	\frac{|(\chi_n,v)_{L^2(\Omg)}|^2}{|\lm - \lm_n|^2}\\
	& \le 
	\left(\sum_{n=1}^\infty
		|(\chi_n,v)_{L^2(\Omg)}|^2\right)
	\left(\max_{n\in\dN}\left\{
	\frac{1}{|\lm - \lm_n|^2}\right\}\right)\\
	&\le 
	\|v\|_{L^2(\Omg)}^2\max_{n\in\{1,\dots,k\}} 
	\left\{
	\frac{1}{|\lm-\lm_n|^2}\right\}, 
	\end{aligned}
	\]
	where we applied the Parseval identity 
and used the fact that the closest eigenvalue of $\sfH_{\rm D}$ to $\lm$ is among $\{\lm_n\}_{n=1}^k$.
	Hence, the assumption~\eqref{eq:smallness} 
	implies
	\[
		\|\RD(\lm)v\|_{L^2(\Omg)}
		< \frac{\|\RD(\lm)\one\|_{L^2(\Omg)}}{|\Omega|} 	
	\] 
	and we get
	a contradiction.
\end{proof}	
In the next theorem we characterise low lying real eigenvalues of $\Op$ under the same assumption as in Theorem~\ref{thm1}. We obtain an interlacing property. 
\begin{thm}\label{thm2} 
	Let $\mu$ be a measure as in Hypothesis~\ref{hyp:measure2}.
	Assume that $\Omega$ is such that  the low eigenvalues $\{\lm_n\}_{n\ge 2}^{k}$ of $\OpD$ are simple. 
	Then there is a unique eigenvalue of $\Op$ in the interval $(\lm_i,\lm_{i+1})$
	for each $i \in\{1,2,\dots,k-1\}$, and its geometric multiplicity is one.
\end{thm}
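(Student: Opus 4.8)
The plan is to recast the statement, via Theorem~\ref{thm0}\,(iii), as a counting problem for the zeros of the meromorphic function $\sfm_\mu$, and then to perform that count by a homotopy (degree) argument in which Theorem~\ref{thm1} confines the zeros to the real axis. Fix $i\in\{1,\dots,k-1\}$. Since $(\lm_i,\lm_{i+1})$ lies in $(0,\infty)\sm\s(\OpD)$ — it sits inside $(\lm_1,\lm_k)$ and, $\lm_i,\lm_{i+1}$ being consecutive simple eigenvalues of $\OpD$, contains no point of $\s(\OpD)$ — Theorem~\ref{thm0}\,(iii) identifies the eigenvalues of $\Op$ in this interval with the zeros of $\sfm_\mu$ there, each of them automatically geometrically simple. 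Hence it suffices to show that $\sfm_\mu$ has exactly one zero, counted with multiplicity, in $(\lm_i,\lm_{i+1})$. Putting $c_n:=(\one,\chi_n)_{L^2(\Omg)}$, $v_n:=(\chi_n,v)_{L^2(\Omg)}$ and $a_n:=c_n\langle\chi_n\rangle_\mu=c_n^2/|\Omg|+c_nv_n$, so that $\sfm_\mu(\lm)=\sum_n a_n/(\lm_n-\lm)$, one has $a_n>0$ for $n\le k$ because $|c_nv_n|\le|c_n|\,\|v\|_{L^2(\Omg)}<c_n^2/|\Omg|$ by~\eqref{eq:smallness}; in particular $\lm_i$ and $\lm_{i+1}$ are genuine simple poles of $\sfm_\mu$.

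I would then use the homotopy $\dd\mu_t(x):=\big(\tfrac{\one}{|\Omg|}+tv(x)\big)\dd x$, $t\in[0,1]$. Each $\mu_t$ again satisfies Hypothesis~\ref{hyp:measure2} with the \emph{same} $k$: $tv\ge-1/|\Omg|$ since $v\ge-1/|\Omg|$ and $0\le t\le 1$, $(tv,\one)_{L^2(\Omg)}=0$, and $\|tv\|_{L^2(\Omg)}\le\|v\|_{L^2(\Omg)}$ still obeys~\eqref{eq:smallness}. Consequently Theorem~\ref{thm1} yields $\s(\sfH_{\mu_t})\cap\dH_k=\emptyset$ for every $t$. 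Moreover $\sfm_{\mu_t}(\lm)=\sfm_{\mu_0}(\lm)+t\,(\RD(\lm)\one,v)_{L^2(\Omg)}$ is jointly continuous in $(t,\lm)\in[0,1]\times(\dC\sm\s(\OpD))$, and $\lm_i,\lm_{i+1}$ stay simple poles for all $t$ (their residue-type coefficients $c_j^2/|\Omg|+tc_jv_j$ remain positive, again by~\eqref{eq:smallness}).

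Next I would fix a small $\delta>0$ and take $\gamma_i$ to be the positively oriented boundary of the rectangle $[\lm_i-\delta,\lm_{i+1}+\delta]\times[-\delta,\delta]$, with $\delta$ chosen small enough that: (a) $[\lm_i-\delta,\lm_{i+1}+\delta]$ contains no eigenvalue of $\OpD$ besides $\lm_i,\lm_{i+1}$; (b) $\lm_{i+1}+\delta<\tfrac12(\lm_k+\lm_{k+1})$; (c) $\delta<r^\ast$, where $r^\ast>0$ is, uniformly in $t\in[0,1]$, a radius on which $|\sfm_{\mu_t}(\lm)|\ge a^-/(2|\lm-\lm_j|)$ for $0<|\lm-\lm_j|\le r^\ast$ and $j\in\{i,i+1\}$ — such $r^\ast$ exists because $\sfm_{\mu_t}$ minus its singular part $\tfrac{c_j^2/|\Omg|+tc_jv_j}{\lm_j-\lm}$ at $\lm_j$ is bounded near $\lm_j$ uniformly in $t$ (the quantities $\sum_n|c_n^2/|\Omg|+tc_nv_n|$ are bounded uniformly in $t$), while $c_j^2/|\Omg|+tc_jv_j\ge a^->0$. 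With such a $\gamma_i$, every zero of $\sfm_{\mu_t}$ lying in the closed rectangle must be real and belong to $(\lm_i,\lm_{i+1})$: a non-real zero would lie in $\dH_k\cap\rho(\OpD)$, which by Theorems~\ref{thm1} and~\ref{thm0}\,(iii) is impossible; a real zero in $[\lm_i-\delta,\lm_i]\cup[\lm_{i+1},\lm_{i+1}+\delta]$ is excluded at $\lm_i,\lm_{i+1}$ (poles) and on the punctured $\delta$-discs by (c). In particular $\sfm_{\mu_t}$ does not vanish on $\gamma_i$, for any $t$, and by compactness $\min_{t\in[0,1],\,\lm\in\gamma_i}|\sfm_{\mu_t}(\lm)|>0$.

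Finally, one invokes the argument principle: for each $t\in[0,1]$,
\[
\frac{1}{2\pi\ii}\oint_{\gamma_i}\frac{\sfm_{\mu_t}'(\lm)}{\sfm_{\mu_t}(\lm)}\,\dd\lm=Z_t-2,
\]
where $Z_t$ is the number of zeros of $\sfm_{\mu_t}$ in the interior of $\gamma_i$ counted with multiplicity (equivalently, the number in $(\lm_i,\lm_{i+1})$), and $2$ is the number of (simple) poles $\lm_i,\lm_{i+1}$. The integral is continuous in $t$ — the integrand is jointly continuous on $\gamma_i\times[0,1]$, the denominator being bounded away from $0$ — and integer valued, hence constant. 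At $t=0$ the previous paragraph still applies, so $Z_0$ is the number of zeros of $\sfm_{\mu_0}(\lm)=\tfrac1{|\Omg|}\sum_n c_n^2/(\lm_n-\lm)$ in $(\lm_i,\lm_{i+1})$; this function is strictly increasing on $(\lm_i,\lm_{i+1})$ with limits $-\infty$ at $\lm_i^+$ and $+\infty$ at $\lm_{i+1}^-$ (recall $c_i,c_{i+1}\ne0$), so it has precisely one, simple, zero, i.e. $Z_0=1$. Therefore $Z_t=1$ for all $t$; taking $t=1$ shows that $\sfm_\mu$ has a unique zero in $(\lm_i,\lm_{i+1})$, which by Theorem~\ref{thm0}\,(iii) is the unique eigenvalue of $\Op$ in $(\lm_i,\lm_{i+1})$, and it is geometrically simple. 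Running $i$ over $\{1,\dots,k-1\}$ finishes the proof. The one genuinely non-trivial input is Theorem~\ref{thm1}, used uniformly along the homotopy to keep the zeros on $\dR$; the main point requiring care is the construction of $\gamma_i$ (and the uniformity in $t$ of the estimates near $\lm_i,\lm_{i+1}$), after which the conclusion is a routine degree count.
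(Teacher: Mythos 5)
Your argument is correct, but it proves uniqueness by a genuinely different mechanism than the paper. The paper's proof is a short, direct real-analysis argument: it observes that $f:=\sfm_\mu$ is real-valued on $(\lm_i,\lm_{i+1})$, that the positivity of the coefficients $\aa_i,\aa_{i+1}$ (your $a_i,a_{i+1}$, obtained from~\eqref{eq:smallness} exactly as you do) forces $f(\lm)\to-\infty$ as $\lm\to\lm_i^+$ and $f(\lm)\to+\infty$ as $\lm\to\lm_{i+1}^-$, and that $f'(\lm)=(\RD(\lm)^2\one,w)_{L^2(\Omg)}>0$ on the whole interval --- the last point being proved by recycling verbatim the Cauchy--Schwarz estimate $\frac{1}{|\Omg|}\|\RD(\lm)\one\|_{L^2(\Omg)}>\|\RD(\lm)v\|_{L^2(\Omg)}$ from the proof of Theorem~\ref{thm1}. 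Strict monotonicity plus the boundary limits give existence and uniqueness of the zero in one stroke, and Theorem~\ref{thm0}\,(iii) converts this into the statement about $\Op$, just as in your reduction. You instead obtain uniqueness from homotopy invariance of the winding number along $\dd\mu_t=(\tfrac{\one}{|\Omg|}+tv)\dd x$, using Theorem~\ref{thm1} uniformly in $t$ to keep zeros off the non-real part of the contour, uniform residue estimates to keep them away from the poles, and the explicit computation at $t=0$. This is more machinery (and the contour construction and uniformity in $t$ need the care you acknowledge), but it is a robust degree argument that would survive in situations where $\sfm_\mu$ is not monotone on the gap, and it uses Theorem~\ref{thm1} only as a black box rather than reopening its proof. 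One small point worth making explicit in your write-up: for $i=1$ you need $\lm_1$ to be a simple pole, which holds because the ground state of $\OpD$ is simple even though the theorem's hypothesis only lists $\{\lm_n\}_{n\ge 2}^{k}$ as simple; the paper tacitly relies on the same fact.
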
	

\begin{figure} 
\centering
\includegraphics[width=13cm]{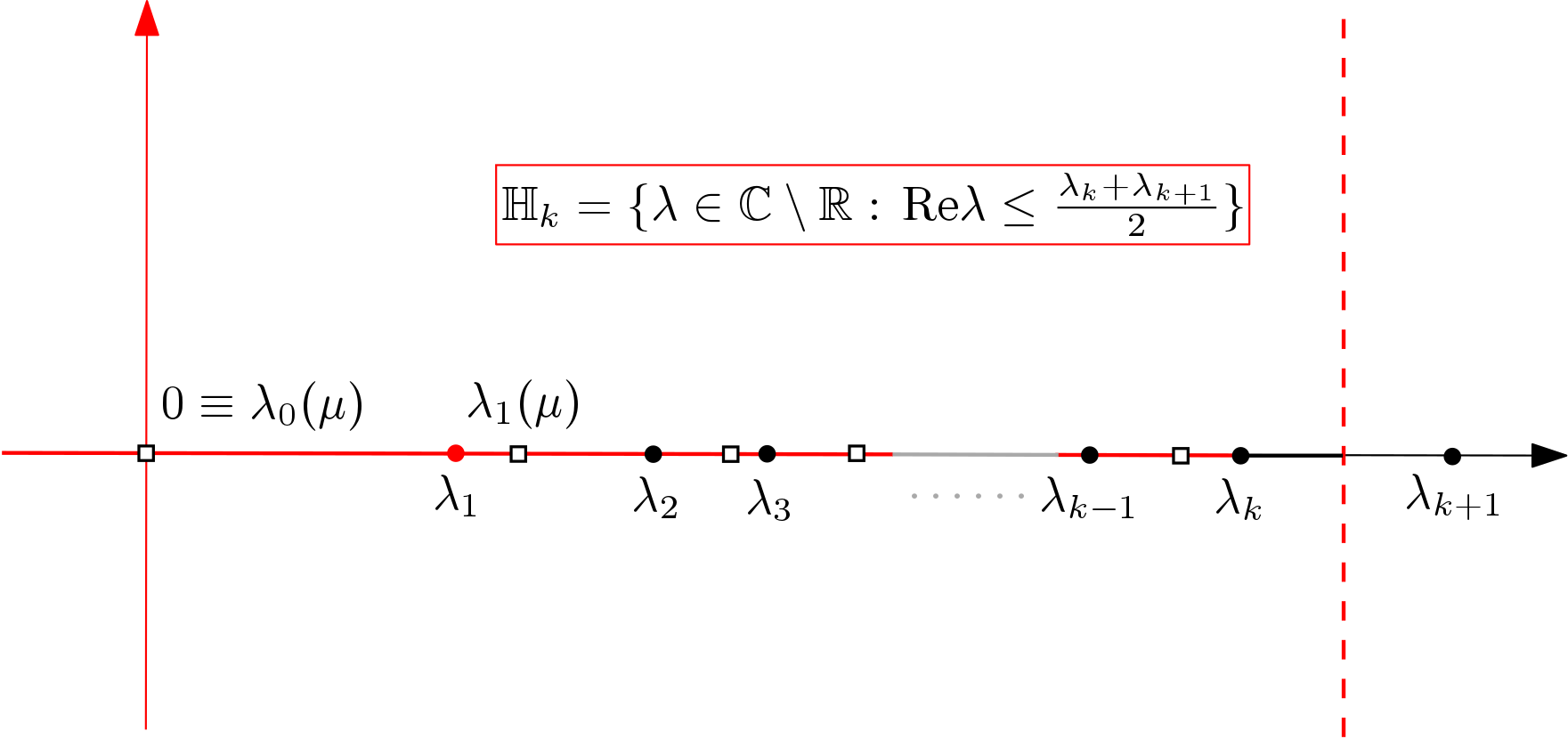}  
\caption{Spectral enclosure given by Theorem \ref{thm1}. The only eigenvalues of $\Op$ in the cut half-plane $\dH_k$  are either those localized in Theorem~\ref{thm2} (depicted as white rectangles) or possibly the higher Dirichlet eigenvalues (depicted as black discs) and some points on the black thick line segment. }
\end{figure} 
 
\begin{proof}
   	First,
	consider for $n\in\{1,2,\dots,k\}$ the coefficients 
	\[
	\begin{aligned}
	\aa_n :=\ &
	\frac{1}{|\Omega|}
	|(\chi_n,\one)_{L^2(\Omg)}|^2
	+
	(\chi_n,v)_{L^2(\Omg)}(\one,\chi_n)_{L^2(\Omg)}\\
	\ge \ &
	\frac{1}{|\Omega|}|(\chi_n,\one)_{L^2(\Omg)}|^2
	-
	\|v\|_{L^2(\Omg)} |(\one,\chi_n)_{L^2(\Omg)}|\\
	= \ &
	|(\chi_n,\one)_{L^2(\Omg)}|
	\left(
	\frac{1}{|\Omega|}
	|(\chi_n,\one)_{L^2(\Omg)}|
	-
	\|v\|_{L^2(\Omg)}\right) > 0,
	\end{aligned}	
	\]	
	where the last inequality follows from~\eqref{eq:smallness}.
	Let $i\in \{1,2,\dots,k-1\}$ be fixed
	and consider the real-valued continuous function
	\[
	(\lm_i,\lm_{i+1})\ni \lm\mapsto f(\lm) := \sfm_\mu(\lm).
	\]
	It can be decomposed as
	\[
	f(\lm) = 
	\frac{\aa_i}{\lm_i  -\lm}
	+
	\frac{\aa_{i+1}}{\lm_{i+1} - \lm}
	+
	g(\lm),
	\]
	where the function 
$\lambda \mapsto g(\lambda)$ 
has finite limits
	as $\lm$ tends to the endpoints of the interval
	$(\lm_i,\lm_{i+1})$.
	Hence, we get
	\begin{equation}\label{eq:limits}
	\lim_{\lm\arr\lm_i^+} f(\lm) = -\infty\and
	\lim_{\lm\arr\lm_{i+1}^-} f(\lm) = +\infty.
	\end{equation}
	Now, let us look at the derivative of $f$,
	\[
	\begin{aligned}
	f'(\lm) & = \left(\RD(\lm)^2\one,\tfrac{\one}{|\Omega|}+v\right)_{L^2(\Omg)}\\
	&\ge
	\frac{1}{|\Omega|}
	\|\RD(\lm)\one\|_{L^2(\Omg)}^2
	-\|\RD(\lm)\one\|_{L^2(\Omg)}
	\|\RD(\lm)v\|_{L^2(\Omg)}\\
	& =
	\|\RD(\lm)\one\|_{L^2(\Omg)}
	\left(\frac{1}{|\Omega|}\|\RD(\lm)\one\|_{L^2(\Omg)}
	-		\|\RD(\lm)v\|_{L^2(\Omg)}\right) > 0 ,
	\end{aligned}
	\]
	where the inequality $\frac{1}{|\Omega|}\|\RD(\lm)\one\|_{L^2(\Omg)}
	> \|\RD(\lm)v\|_{L^2(\Omg)}$ follows in exactly the same way as in the proof of Theorem~\ref{thm1}.
	
	Combining Theorem~\ref{thm0}\,(iii) with continuity of $f$, positivity of $f'$ and the limits~\eqref{eq:limits} we get that there is exactly one eigenvalue of $\Op$ in the interval $(\lm_i,\lm_{i+1})$. 
Geometric multiplicity of these eigenvalues is one in view of Theorem~\ref{thm0}\,(iii).	
\end{proof}	
\begin{remark}
As seen from the proof, some generalizations are possible. For example, one can include the case of multiple eigenvalues by making the single assumption $\lambda_{k}<\lambda_{k+1}$. Then there will be an eigenvalue in each interval $(\lambda_{j},\lambda_{j+1})$ with $\lambda_{j}<\lambda_{j+1}\leq\lambda_{k}$.
\end{remark}

\begin{prop}\label{prop:real}
	Let $\mu$ be a measure as in Hypothesis~\ref{hyp:measure2} satisfying~\eqref{eq:smallness}
	with $k= 2$.
	Let the second eigenvalue $\lm_2$ of $\OpD$ be simple.
	Then the point $\lm_1(\mu)$ is a real eigenvalue of $\Op$, 
it belongs to the interval
	$(\lm_1,\lm_2)$ and satisfies
	the  bound
	\[
	\left|\frac{\aa_1}{\lm_1 - \lm_1(\mu)}
	+
	\frac{\aa_2}{\lm_2 - \lm_1(\mu)}\right|
	\le \frac{|\Omg|^{1/2}\|w\|_{L^2(\Omg)}}{\lm_3 - \lm_2},
	\]
	where $\aa_j = (\one,\chi_j)_{L^2(\Omg)}(\chi_j,w)_{L^2(\Omg)}$, $j=1,2$, and $w = \frac{\one}{|\Omg|} +v$.
\end{prop}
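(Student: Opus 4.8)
The plan is to pin down the value $\lm_1(\mu)$ exactly by combining the results already established, and then to read off the stated bound from the equation $\sfm_\mu(\lm_1(\mu))=0$.

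\emph{First step: locating $\lm_1(\mu)$.} Applying Theorem~\ref{thm2} with $k=2$ (for which only simplicity of $\lm_2$ is needed, which is assumed), one obtains a unique, geometrically simple eigenvalue $\nu$ of $\Op$ in the open interval $(\lm_1,\lm_2)$. I would then argue that $\lm_1(\mu)=\nu$. Since $\nu>\lm_1>0$, the point $\nu$ is a non-zero eigenvalue, hence $\lm_1(\mu)\le\nu$. For the opposite inequality I would rule out every non-zero eigenvalue with real part strictly below $\nu$. Because $\nu<\lm_2<\tfrac{\lm_2+\lm_3}{2}$, Theorem~\ref{thm1} with $k=2$ forbids any non-real eigenvalue with $\Re\lm\le\tfrac{\lm_2+\lm_3}{2}$, hence any non-real one with $\Re\lm<\nu$; real eigenvalues below $\lm_1$ reduce to $0$ by Proposition~\ref{prop:0_lm1}; real eigenvalues in $(\lm_1,\nu)$ are excluded by the uniqueness clause of Theorem~\ref{thm2}; and it remains to check that $\lm_1$ itself does not lie in $\s(\Op)$. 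For the last point I would suppose $u=u_0+c\in\dom\Op\sm\{0\}$ (with $u_0\in\dom\OpD$, $\langle u_0\rangle_\mu=0$) solves $\Op u=\lm_1 u$; as in the proof of Theorem~\ref{thm0}\,(iii) this gives $(\OpD-\lm_1)u_0=\lm_1 c\,\one$, and testing against the ground state $\chi_1$ yields $\lm_1 c\,(\one,\chi_1)_{L^2(\Omg)}=0$, so $c=0$ because $\lm_1>0$ and $\chi_1>0$; then $u_0$ is a Dirichlet ground state, hence a multiple of $\chi_1$ (the lowest Dirichlet eigenvalue being simple by the positivity-improving property of $\RD(\lm)$ for $\lm<\lm_1$, \cf~Proposition~\ref{prop:0_lm1}), and $\langle u_0\rangle_\mu=0$ forces that multiple to vanish since $(\chi_1,w)_{L^2(\Omg)}>0$ (as $\chi_1>0$ in $\Omg$, $w\ge0$, $(\one,w)_{L^2(\Omg)}=1$); this contradiction shows $\lm_1\notin\s(\Op)$. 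Altogether the only eigenvalue of $\Op$ with real part $<\nu$ is $0$, so $\lm_1(\mu)\ge\nu$, whence $\lm_1(\mu)=\nu\in(\lm_1,\lm_2)$ is real.

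\emph{Second step: the bound.} Since $\nu=\lm_1(\mu)\in\rho(\OpD)\sm\{0\}$ is an eigenvalue of $\Op$, Theorem~\ref{thm0}\,(iii) gives $\sfm_\mu(\nu)=0$. Under Hypothesis~\ref{hyp:measure2} the series~\eqref{eq:mSum} reads $\sfm_\mu(\lm)=\sum_{n\ge1}\aa_n/(\lm_n-\lm)$ with $\aa_n=(\one,\chi_n)_{L^2(\Omg)}(\chi_n,w)_{L^2(\Omg)}$, so isolating the first two terms at $\lm=\nu$ gives
\[
\frac{\aa_1}{\lm_1-\nu}+\frac{\aa_2}{\lm_2-\nu}=-\sum_{n=3}^{\infty}\frac{\aa_n}{\lm_n-\nu}.
\]
For $n\ge3$ one has $\lm_n-\nu\ge\lm_3-\lm_2>0$ (using $\nu<\lm_2\le\lm_n$ and simplicity of $\lm_2$), so by the triangle inequality, the Cauchy--Schwarz inequality in $\ell^2$, and Parseval's identity (with $\sum_n|(\one,\chi_n)_{L^2(\Omg)}|^2=\|\one\|_{L^2(\Omg)}^2=|\Omg|$ and $\sum_n|(\chi_n,w)_{L^2(\Omg)}|^2=\|w\|_{L^2(\Omg)}^2$),
\[
\Bigl|\frac{\aa_1}{\lm_1-\nu}+\frac{\aa_2}{\lm_2-\nu}\Bigr|
\le\frac{1}{\lm_3-\lm_2}\sum_{n=3}^{\infty}\bigl|(\one,\chi_n)_{L^2(\Omg)}\bigr|\,\bigl|(\chi_n,w)_{L^2(\Omg)}\bigr|
\le\frac{|\Omg|^{1/2}\,\|w\|_{L^2(\Omg)}}{\lm_3-\lm_2},
\]
which is the asserted inequality.

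\emph{Expected main obstacle.} The bound is a one-line consequence once $\lm_1(\mu)$ has been identified, so the real work is in the first step: marshalling Theorems~\ref{thm1} and~\ref{thm2} together with Proposition~\ref{prop:0_lm1} so as to certify that the infimum defining $\lm_1(\mu)$ is actually attained, and attained at a real point of $(\lm_1,\lm_2)$. The one place where the probabilistic nature of $\mu$ enters essentially — and the point I would be most careful about — is the verification that the Dirichlet eigenvalue $\lm_1$ does not itself belong to $\s(\Op)$, which relies on strict positivity of $\chi_1$ together with non-negativity and normalisation of $w$.
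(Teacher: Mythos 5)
Your proof is correct and follows essentially the same route as the paper's: Theorem~\ref{thm2} supplies the real eigenvalue in $(\lm_1,\lm_2)$, Theorem~\ref{thm1} and Proposition~\ref{prop:0_lm1} exclude all competing non-zero eigenvalues with smaller real part, and the bound comes from $\sfm_\mu(\lm_1(\mu))=0$ together with the Cauchy--Schwarz estimate on the tail of the series for $\sfm_\mu$. The only deviation is that where the paper excludes $\lm_1$ from $\s(\Op)$ by citing \cite[Thm.~5]{BP07}, you give a short self-contained argument (testing $(\OpD-\lm_1)u_0=\lm_1 c\,\one$ against $\chi_1$ and using $\langle\chi_1\rangle_\mu>0$), which is a valid substitute.
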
	
\begin{proof}
	By Theorem~\ref{thm1} there are no eigenvalues of $\Op$ in the set $\mathbb{H}_2$. On the other hand, 
	by Theorem~\ref{thm2} there is a real eigenvalue of $\Op$ in the interval $(\lm_1,\lm_2)$.
	Hence, the non-zero eigenvalue of $\Op$ with the smallest real part is real and belongs to the interval $(-\infty,\lm_2)$.
	By Proposition~\ref{prop:0_lm1} there are no eigenvalues of $\Op$ in the interval $(-\infty,\lm_1)$. According to~\cite[Thm. 5]{BP07} $\lm_1$ is not an eigenvalue of $\Op$. Hence, we conclude that $\lm_1(\mu) \in (\lm_1,\lm_2)$ is an eigenvalue of $\Op$.
	
	Let us decompose the function $\sfm_\mu(\lm)$ as 
	\[
		\sfm_\mu(\lm)		= 
		\frac{\aa_1}{\lm_1 - \lm}
		+
		\frac{\aa_2}{\lm_2 - \lm}
	 +
		\sum_{n \ge 3} \frac{(\one,\chi_n)_{L^2(\Omg)}(\chi_n,w)_{L^2(\Omg)}}{\lm_n - \lm}.
	\]
	Now, we introduce the comparison function
	\[
		g(\lm) := \frac{\aa_1}{\lm_1 - \lm}
		+
		\frac{\aa_2}{\lm_2 - \lm}.
	\]
	The difference between $\sfm_\mu(\lm)$ and $g(\lm)$ for $\lm \in (\lm_1,\lm_2)$ can be estimated using the Cauchy--Schwarz inequality as follows
	\[
		|\sfm_\mu(\lm) - g(\lm)| 
		\le 
		\frac{1}{\lm_3-\lm_2} 
		\sum_{n\ge 3}|(\one,\chi_n)_{L^2(\Omg)}|\cdot
		|(\chi_n,w)_{L^2(\Omg)}|
		\le 
		\frac{|\Omega|^{1/2}\|w\|_{L^2(\Omg)}}{\lm_3-\lm_2} .
	\]
	Substituting $\lm = \lm_1(\mu)$ and using that $\sfm_\mu(\lm_1(\mu)) = 0$ we end up with
	\[
		|g(\lm_1(\mu))| \le \frac{|\Omega|^{1/2}\|w\|_{L^2(\Omg)}}{\lm_3-\lm_2},
	\]
	which is the desired inequality.
\end{proof}	

\subsection{Perturbations of the ground-state} 
\label{ssec:chi1}
Let $\chi_1 > 0$ be the $L^2$-normalized ground-state of  
$\OpD$. In this subsection we consider measures which are perturbations of
\[
	\boxed{\dd\mu_1(x)=\frac{\chi_1(x)\dd x}{(\chi_1,\one)_{L^2(\Omega)}}.}
\]
Then
we get 
\begin{equation*}
	\sfm_{\mu_1}(\lm)=\frac{1}{\lm_1-\lm}
\end{equation*}
and \eqref{eq:resolvent} reduces to 
\begin{equation*}
\sfR_{\mu_1}(\lm) 
	:= 
	\sfR_{\rm D}(\lm)-\left(\lm\sfR_{\rm D}(\lm)\one  + \one\right)
\frac{1}{(\chi_1,\one)_{L^2(\Omg)}\lm}(\cdot,\chi_1)_{L^2(\Omg)}. 
\end{equation*}
Moreover, by~\cite[Thm. 1]{BP07} we have $\sigma(\sfH_{\mu_1})=(\sigma(\OpD)\sm \{\lm_1\})\cup\{0\}$.

 Then we have the following enclosure for the non-real spectrum.
\begin{thm}\label{thm3}
	Assume
	\begin{gather*}
		\dd\mu(x)
		= \left(\frac{\chi_1(x)}{(\chi_1,\one)_{L^2(\Omega)}} + v(x)\right)\dd x, \quad v\in L^2(\Omg),\\
	\|v\|_{L^2(\Omg)} < |\Omg|^{-1/2},
	\quad (v,\one)_{L^2(\Omg)}=0, \quad v\geq-\frac{\chi_1}{(\chi_1,\one)_{L^2(\Omega)}} \text{ pointwise.}
	\end{gather*}
	Then
	\[
		\s(\Op)\sm [0,\infty) \subset \left\{\lm\in\dC\sm\dR\colon
		\frac{\dist\big(\lm,\s(\sfH_{\rm D}))}{|\lm_1 - \lm|} \le |\Omg|^{1/4}\|v\|_{L^2(\Omg)}^{1/2}
		\right\},	
	\]
	in particular, $\sigma(\Op)\cap\dH_1=\emptyset$.
\end{thm}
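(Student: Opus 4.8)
The plan is to reduce the whole statement, via the Birman--Schwinger-type necessary condition of Lemma~\ref{lem:nec_cond}, to one elementary estimate. First I would check that the hypotheses on $v$ put us in the setting of Hypothesis~\ref{hyp:measure}: the density $w:=\frac{\chi_1}{(\chi_1,\one)_{L^2(\Omg)}}+v$ is in $L^2(\Omg)$, it is non-negative because $\chi_1>0$ and $v\ge-\frac{\chi_1}{(\chi_1,\one)_{L^2(\Omg)}}$, and $(\one,w)_{L^2(\Omg)}=1$ because $(v,\one)_{L^2(\Omg)}=0$. Next, by Proposition~\ref{prop:spectra} one has $\s(\Op)\subset\{\Re\lm\ge0\}$, so $\s(\Op)\sm[0,\infty)$ contains no real points and hence consists solely of non-real eigenvalues; it therefore suffices to establish the inclusion for an arbitrary $\lm\in\s(\Op)\sm\dR$. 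Such a $\lm$ automatically lies in $\rho(\OpD)\sm\{0\}$ since $\OpD$ is self-adjoint, so Lemma~\ref{lem:nec_cond} yields $(\RD(\lm)\RD(\ov\lm)\one,w)_{L^2(\Omg)}=0$.

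The key computation is to isolate the $\chi_1$-contribution. Expanding in the orthonormal Dirichlet basis and using $\lm_n\in\dR$, we have $\RD(\lm)\RD(\ov\lm)\one=\sum_{n}\frac{(\one,\chi_n)_{L^2(\Omg)}}{|\lm-\lm_n|^2}\chi_n$, so $(\RD(\lm)\RD(\ov\lm)\one,\chi_1)_{L^2(\Omg)}=\frac{(\one,\chi_1)_{L^2(\Omg)}}{|\lm-\lm_1|^2}$ with $(\one,\chi_1)_{L^2(\Omg)}>0$. Thus the orthogonality relation becomes
\[
	\frac{1}{|\lm-\lm_1|^2}=-\big(\RD(\lm)\RD(\ov\lm)\one,\,v\big)_{L^2(\Omg)}.
\]
I would then bound the right-hand side by the Cauchy--Schwarz inequality together with Parseval: $\big|\big(\RD(\lm)\RD(\ov\lm)\one,v\big)_{L^2(\Omg)}\big|\le\|\RD(\lm)\RD(\ov\lm)\one\|_{L^2(\Omg)}\|v\|_{L^2(\Omg)}$ and $\|\RD(\lm)\RD(\ov\lm)\one\|_{L^2(\Omg)}^2=\sum_n\frac{|(\one,\chi_n)_{L^2(\Omg)}|^2}{|\lm-\lm_n|^4}\le\frac{\|\one\|_{L^2(\Omg)}^2}{\dist(\lm,\s(\OpD))^4}=\frac{|\Omg|}{\dist(\lm,\s(\OpD))^4}$. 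Substituting and rearranging gives $\frac{\dist(\lm,\s(\OpD))^2}{|\lm-\lm_1|^2}\le|\Omg|^{1/2}\|v\|_{L^2(\Omg)}$, and taking square roots is exactly the claimed enclosure.

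For the final assertion $\s(\Op)\cap\dH_1=\emptyset$, I would use the smallness hypothesis $\|v\|_{L^2(\Omg)}<|\Omg|^{-1/2}$, which makes the enclosure constant $|\Omg|^{1/4}\|v\|_{L^2(\Omg)}^{1/2}$ strictly less than $1$; hence every non-real $\lm\in\s(\Op)$ satisfies $\dist(\lm,\s(\OpD))<|\lm-\lm_1|$. Writing $\lm=x+\ii y$ with $y\ne0$, this is equivalent to the existence of some $n$ with $|x-\lm_n|<|x-\lm_1|$; since $\lm_1$ is the smallest Dirichlet eigenvalue this forces $n\ge2$ and $x>\frac12(\lm_1+\lm_n)\ge\frac12(\lm_1+\lm_2)$, i.e.\ $\lm\notin\dH_1$. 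As real points are excluded from $\dH_1$ by definition, this closes the argument.

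I do not expect a serious obstacle here: the construction is essentially a corollary of Lemma~\ref{lem:nec_cond}. The two points that require a little care are the bookkeeping showing that $\s(\Op)\sm[0,\infty)$ reduces to the non-real spectrum (so that the lemma applies to every relevant $\lm$), and the short geometric step converting ``$\lm_1$ is not the nearest Dirichlet eigenvalue to $\lm$'' into the half-plane bound $\Re\lm>\frac12(\lm_1+\lm_2)$.
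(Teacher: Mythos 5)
Your proposal is correct and follows essentially the same route as the paper: the necessary condition of Lemma~\ref{lem:nec_cond}, isolation of the $\chi_1$-term giving $|\lm_1-\lm|^{-2}$, and a Cauchy--Schwarz/spectral-theorem bound $\|\RD(\lm)\RD(\ov\lm)\one\|_{L^2(\Omg)}\le |\Omg|^{1/2}\dist(\lm,\s(\OpD))^{-2}$, followed by the observation that the enclosure constant is below $1$ on $\dH_1$. Your extra bookkeeping (verifying Hypothesis~\ref{hyp:measure} and reducing $\s(\Op)\sm[0,\infty)$ to the non-real spectrum via Proposition~\ref{prop:spectra}) is a harmless elaboration of steps the paper leaves implicit.
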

\begin{proof}
 We start with the expansion
\[
	\left(\RD(\lm)\RD(\ov\lm)\one, 
					\frac{\chi_1}{(\chi_1,\one)_{L^2(\Omega)}}+ v\right)_{L^2(\Omg)} 
	= 
	\frac{1}{|\lm_1 - \lm|^2} 
	+ 
	\big(\RD(\lm)\RD(\ov\lm)\one,
							v\big)_{L^2(\Omg)}.
\]
The second term can be estimated using
the Cauchy--Schwarz inequality and the spectral theorem as follows
\[
\begin{aligned}
	\left|\big(\RD(\lm)\RD(\ov\lm)\one, v\big)_{L^2(\Omg)}\right|
	&\le
	\big\|\RD(\lm)\RD(\ov\lm)\one\big\|_{L^2(\Omg)}  \|v\|_{L^2(\Omg)}\\
	& \le
	\big\|\RD(\lm)\RD(\ov\lm)\big\|
	\|\one\|_{L^2(\Omg)} \|v\|_{L^2(\Omg)}\\
	&\le
	\frac{1}{\big(\dist\big(\lm,\s(\sfH_{\rm D}))\big)^2}
	\sqrt{|\Omg|}\|v\|_{L^2(\Omg)}.
\end{aligned}
\]
Hence, taking Lemma \ref{lem:nec_cond} into account, the condition
\begin{equation*}
	\dist\big(\lm,\s(\sfH_{\rm D}))
	\frac{1}{|\lm_1 - \lm|} > |\Omg|^{1/4}\|v\|_{L^2(\Omg)}^{1/2}.
\end{equation*}
ensures that $\lm\in\dC\sm\dR$ is not an eigenvalue of $\sfH_\mu$. 
For $\lm \in \dH_1$ this condition clearly holds in view of $\|v\|_{L^2(\Omg)} < |\Omg|^{-1/2}$.
\end{proof}

\section*{Acknowledgments}
The research was supported by the Czech-French MOBILITY project No. 8J18FR033.
D.K.\ was supported by the GACR grants No.~18-08835S and 20-17749X 
of the Czech Science Foundation. K.P.\ was supported in part by the PHC Amadeus 37853TB funded by the French Ministry of Foreign Affairs and the French Ministry of Higher Education, Research and Innovation.
M.T.\ was supported by the project 
CZ.02.1.01/\-0.0/0.0/\-16\_019/\-0000778 from the European Regional Development Fund. The authors wish to express their thanks to Sergey Denisov for stimulating discussions. The authors are also grateful to the anonymous referee whose suggestions led to improvements in the manuscript.

\bibliographystyle{alpha}

\newcommand{\etalchar}[1]{$^{#1}$}

\end{document}